\documentclass[reqno, 12pt]{amsart}
\pdfoutput=1
\makeatletter
\let\origsection=\section \def\section{\@ifstar{\origsection*}{\mysection}}
\def\mysection{\@startsection{section}{1}\z@{.7\linespacing\@plus\linespacing}{.5\linespacing}{\normalfont\scshape\centering\S}}
\makeatother

\usepackage{amsmath,amssymb,amsthm}
\usepackage{mathrsfs}
\usepackage{mathabx}\changenotsign
\usepackage{dsfont}
\usepackage{bbm}

\usepackage{xcolor}
\usepackage[backref=section]{hyperref}
\usepackage[ocgcolorlinks]{ocgx2}
\hypersetup{
    colorlinks=true,
    linkcolor={red!60!black},
    citecolor={green!60!black},
    urlcolor={blue!60!black},
}

\usepackage[open,openlevel=2,atend]{bookmark}

\usepackage[abbrev,msc-links,backrefs]{amsrefs}
\usepackage{doi}

\renewcommand{\PrintDOI}[1]{\doi{#1}}

\usepackage[T1]{fontenc}
\usepackage{lmodern}
\usepackage[babel]{microtype}
\usepackage[english]{babel}

\linespread{1.3}
\usepackage{geometry}
\geometry{left=25mm,right=25mm, top=25mm, bottom=25mm}

\numberwithin{equation}{section}
\numberwithin{figure}{section}

\usepackage{enumitem}
\def\rmlabel{\upshape({\itshape \roman*\,})}

\let\polishlcross=\l
\def\l{\ifmmode\ell\else\polishlcross\fi}

\def\tand{\ \text{and}\ }
\def\qand{\quad\text{and}\quad}
\def\qqand{\qquad\text{and}\qquad}

\let\setminus=\smallsetminus

\makeatletter
\def\moverlay{\mathpalette\mov@rlay}
\def\mov@rlay#1#2{\leavevmode\vtop{   \baselineskip\z@skip \lineskiplimit-\maxdimen
   \ialign{\hfil$\m@th#1##$\hfil\cr#2\crcr}}}
\newcommand{\charfusion}[3][\mathord]{
    #1{\ifx#1\mathop\vphantom{#2}\fi
        \mathpalette\mov@rlay{#2\cr#3}
      }
    \ifx#1\mathop\expandafter\displaylimits\fi}
\makeatother

\newcommand{\dcup}{\charfusion[\mathbin]{\cup}{\cdot}}

\DeclareFontFamily{U}  {MnSymbolC}{}
\DeclareSymbolFont{MnSyC}         {U}  {MnSymbolC}{m}{n}
\DeclareFontShape{U}{MnSymbolC}{m}{n}{
    <-6>  MnSymbolC5
   <6-7>  MnSymbolC6
   <7-8>  MnSymbolC7
   <8-9>  MnSymbolC8
   <9-10> MnSymbolC9
  <10-12> MnSymbolC10
  <12->   MnSymbolC12}{}
\DeclareMathSymbol{\powerset}{\mathord}{MnSyC}{180}

\let\epsilon=\varepsilon
\let\eps=\epsilon
\let\rho=\varrho
\let\theta=\vartheta

\let\phi=\varphi

\def\EE{{\mathds E}}

\def\PP{{\mathds P}}

\newcommand{\cA}{\mathcal{A}}
\newcommand{\cB}{\mathcal{B}}
\newcommand{\cC}{\mathcal{C}}

\newcommand{\cK}{\mathcal{K}}

\newcommand{\cQ}{\mathcal{Q}}

\newcommand{\cT}{\mathcal{T}}

\newcommand{\ccG}{\mathscr{G}}

\newtheoremstyle{note}  	{4pt}{4pt}{\sl}{}{\bfseries}{.}{.5em}{}
\newtheoremstyle{introthms}	{3pt}{3pt}{\itshape}{}  {\bfseries}{.}{.5em}{\thmnote{#3}}
\newtheoremstyle{remark}	{2pt}{2pt}{\rm}{}{\bfseries}{.}{.3em}{}

\theoremstyle{plain}
\newtheorem{theorem}{Theorem}[section]
\newtheorem{lemma}[theorem]{Lemma}
\newtheorem{example}[theorem]{Example}

\theoremstyle{note}
\newtheorem{dfn}[theorem]{Definition}
\usepackage{etoolbox}
\AtBeginEnvironment{dfn}{}

\theoremstyle{remark}

\newtheorem{problem}[theorem]{Problem}

\usepackage{lineno}
\newcommand*\patchAmsMathEnvironmentForLineno[1]{
\expandafter\let\csname old#1\expandafter\endcsname\csname #1\endcsname
\expandafter\let\csname oldend#1\expandafter\endcsname\csname end#1\endcsname
\renewenvironment{#1}
{\linenomath\csname old#1\endcsname}
{\csname oldend#1\endcsname\endlinenomath}}
\newcommand*\patchBothAmsMathEnvironmentsForLineno[1]{
\patchAmsMathEnvironmentForLineno{#1}
\patchAmsMathEnvironmentForLineno{#1*}}
\AtBeginDocument{
\patchBothAmsMathEnvironmentsForLineno{equation}
\patchBothAmsMathEnvironmentsForLineno{align}
\patchBothAmsMathEnvironmentsForLineno{flalign}
\patchBothAmsMathEnvironmentsForLineno{alignat}
\patchBothAmsMathEnvironmentsForLineno{gather}
\patchBothAmsMathEnvironmentsForLineno{multline}
}

\usepackage{scalerel}

\makeatletter
\newcommand{\overrighharpoonup}[1]{\ThisStyle{\vbox {\m@th\ialign{##\crcr
 \rightharpoonupfill \crcr
 \noalign{\kern-\p@\nointerlineskip}
 $\hfil\SavedStyle#1\hfil$\crcr}}}}

\def\rightharpoonupfill{$\SavedStyle\m@th\mkern+0.8mu\cleaders\hbox{$\shortbar\mkern-4mu$}\hfill\rightharpoonuptip\mkern+0.8mu$}

\def\rightharpoonuptip{\raisebox{\z@}[2pt][1pt]{\scalebox{0.55}{$\SavedStyle\rightharpoonup$}}}

\def\shortbar{\smash{\scalebox{0.55}{$\SavedStyle\relbar$}}}
\makeatother

\let\seq=\overrighharpoonup

\usepackage{tikz}
\usepackage{scalerel}

\newsavebox\vdegbox
\savebox\vdegbox{\tikz{
		\draw[black,fill=black] (90:1) circle (.35);
		\draw[black,line width=0.10cm] (210:1) circle (.30);
		\draw[black,line width=0.10cm] (330:1) circle (.30);
		\draw[opacity=0] (0:1.2) circle (0.1);
	}}

\newsavebox\vvbox
\savebox\vvbox{\tikz{
		\draw[black,line width=0.10cm] (90:1) circle (.30);
		\draw[black,fill=black] (210:1) circle (.35);
		\draw[black,fill=black] (330:1) circle (.35);
		\draw[opacity=0] (0:1.2) circle (0.1);
	}}

\newsavebox\pdegbox
\savebox\pdegbox{\tikz{
		\draw[black,line width=0.10cm] (90:1) circle (.30);
		\draw[black,fill=black] (210:1) circle (.35);
		\draw[black,fill=black] (330:1) circle (.35);
		\draw[black,line width=0.28cm ] (210:1) -- (330:1);
		\draw[opacity=0] (0:1.2) circle (0.1);
	}}

\newsavebox\vvvbox
\savebox\vvvbox{\tikz{
		\draw[black,fill=black] (90:1) circle (.35);
		\draw[black,fill=black] (210:1) circle (.35);
		\draw[black,fill=black] (330:1) circle (.35);
		\draw[opacity=0] (0:1.2) circle (0.1);
	}}
\newcommand{\vvv}{\mathord{\scaleobj{1.2}{\scalerel*{\usebox{\vvvbox}}{x}}}}

\newsavebox\evbox
\savebox\evbox{\tikz{
		\draw[black,fill=black] (90:1) circle (.35);
		\draw[black,fill=black] (210:1) circle (.35);
		\draw[black,fill=black] (330:1) circle (.35);
		\draw[black,line width=0.28cm ] (210:1) -- (330:1);
		\draw[opacity=0] (0:1.2) circle (0.1);
	}}
\newcommand{\ev}{\mathord{\scaleobj{1.2}{\scalerel*{\usebox{\evbox}}{x}}}}

\newsavebox\eebox
\savebox\eebox{\tikz{
		\draw[black,fill=black] (90:1) circle (.35);
		\draw[black,fill=black] (210:1) circle (.35);
		\draw[black,fill=black] (330:1) circle (.35);
		\draw[black,line width=0.28cm ] (90:1) -- (330:1);
		\draw[black,line width=0.28cm ] (90:1) -- (210:1);
		\draw[opacity=0] (0:1.2) circle (0.1);
	}}
\newcommand{\ee}{\mathord{\scaleobj{1.2}{\scalerel*{\usebox{\eebox}}{x}}}}
\newcommand{\piee}{\pi_{\ee}}

\newsavebox\eeebox
\savebox\eeebox{\tikz{
		\draw[black,fill=black] (90:1) circle (.35);
		\draw[black,fill=black] (210:1) circle (.35);
		\draw[black,fill=black] (330:1) circle (.35);
		\draw[black,line width=0.28cm ] (90:1) -- (330:1);
		\draw[black,line width=0.28cm ] (90:1) -- (210:1);
		\draw[black,line width=0.28cm ] (210:1) -- (330:1);
		\draw[opacity=0] (0:1.2) circle (0.1);
	}}

\def\red{\rm red}
\def\blue{\rm blue}

\begin{document}

\title[Localised codegree conditions for tight Hamilton cycles]{Localised codegree conditions for tight Hamilton cycles in 3-uniform hypergraphs}

\author[P.\ Ara\'ujo]{Pedro Ara\'ujo}
\address{IMPA, Rio de Janeiro, Brazil}
\email{pedroc@impa.br}

\author[S.~Piga]{Sim\'on Piga}
\address{Fachbereich Mathematik, Universit\"at Hamburg, Hamburg, Germany}
\email{simon.piga@uni-hamburg.de}

\author[M.~Schacht]{Mathias Schacht}
\address{Fachbereich Mathematik, Universit\"at Hamburg, Hamburg, Germany}
\email{schacht@math.uni-hamburg.de}
\thanks{
Large part of this collaboration was carried out while the first author was a visiting Ph.D.\ student
at the University of Hamburg, funded by PDSE program by the Brazilian agency CAPES. The second author was supported by
ANID/CONICYT Acuerdo Bilateral DAAD/62170017 through a Ph.D.\ Scholarship.
The third author was supported by the ERC (PEPCo 724903).}

\begin{abstract}
We study sufficient conditions for the existence of Hamilton cycles
in uniformly dense $3$-uniform hypergraphs. Problems of this type were
first considered by Lenz, Mubayi, and Mycroft for loose Hamilton
cycles and Aigner-Horev and Levy considered it for tight Hamilton
cycles for a fairly strong notion of uniformly dense hypergraphs. We
focus on tight cycles and obtain optimal results for a weaker notion of
uniformly dense hypergraphs. 

We show that if an $n$-vertex $3$-uniform
hypergraph $H=(V,E)$ has the property that for any set of vertices $X$
and for any collection $P$ of pairs of vertices, the number of
hyperedges composed by a pair belonging to $P$ and one vertex from $X$
is at least $(1/4+o(1))|X||P| - o(|V|^3)$ and~$H$ has minimum vertex
degree at least $\Omega(|V|^2)$, then $H$ contains a tight Hamilton
cycle. A probabilistic construction shows that the constant $1/4$ is
optimal in this context.
\end{abstract}

\maketitle

\section{Introduction}

Dirac's theorem states that any graph on~$n\geq3$ vertices and minimum degree at least~$n/2$ contains  a Hamilton cycle. This is best possible in terms of minimum degree, since a graph composed by two disjoint cliques of sizes~$\lfloor n/2 \rfloor$ and~$\lceil n/2\rceil$ is not even connected. Here we investigate what kind of properties ensure the existence of Hamilton cycles in 3-uniform hypergraphs. 

Since we restrict our attention to 3-uniform hypergraphs, if not mentioned otherwise, by a hypergraph we will mean a 3-uniform hypergraph.
We denote an edge~$\{u,v,w\}\in E(H)$ by~$uvw$. 
An ordered set of distinct vertices~$(v_1, v_2, \dots, v_\ell)$ forms a \textit{tight path} of length~$\ell-2$ if 
every three consecutive vertices form an edge. The pairs~$(v_1, v_2)$ and~$(v_{\ell-1}, v_\ell)$ are the \textit{starting pair} and the \textit{ending pair} of the path, and we frequently call such a tight path a~$(v_1,v_2)$-$(v_{\ell-1},v_\ell)$-path. 
For simplicity we  denote a tight path by listing its vertices.
A tight path~$v_1v_2\dots v_\ell$ together with the edges~$v_{\ell-1}v_\ell v_1$ and~$v_\ell v_1v_2$ forms a \textit{tight cycle} of length~$\ell$. 
A tight cycle which covers all vertices of the hypergraph will be called \textit{tight Hamilton cycle}. Similarly, a \textit{loose Hamilton cycle} in an~$n$-vertex hypergraph (with~$n$ even) is a cyclicly ordered collection of~$n/2$ edges in such a way that two edges intersect if and only if they are consecutive and, consequently, they intersect in exactly one vertex. 

There are more than one notion of degrees in hypergraphs. Given a hypergraph~$H$ and~$v \in V(H)$, we define the \textit{neighbourhood} and the \textit{degree} of~$v$ by
$$N_H(v) = \{e \smallsetminus \{v\}\colon v\in e \in E(H)\} \quad \text{ and } \quad d_H(u)=|N(u)|,$$

\noindent respectively. Similarly, for~$u,v\in V(H)$, we also define their \textit{neighbourhood} and their \textit{codegree} by
$$N_H(u,v) = \{w\in V(H) \colon \{u,v,w\}\in E(H)\}\quad \text{ and } \quad d_H(u,v)=|N(u,v)|.$$
Let~$\delta_1(H)$ be the \textit{minimum degree} and~$\delta_2(H)$ the \textit{minimum codegree} of~$H$. 

A possible extension of Dirac's theorem for hypergraphs was proposed in~\cite{KATONA}. The optimal minimum degree and codegree conditions were obtained for \textit{loose} Hamilton cycles~\cites{loosedegree, loosecodegree} and for \textit{tight} Hamilton cycles~\cites{5/9, RRSz}. 
As the extremal examples for Dirac's theorem for graphs, the constructions that show optimality for those results have a very rigid structure. 
In the graph case, for instance, the extremal constructions contain large pairs of sets of vertices with no edges between them.

Motivated by this, we say an~$n$-vertex graph~$G$ is~$(\rho, d)$-dense if for every pair of vertex sets,$~X$ and~$Y$, the number of edges between them is at least~$d|X||Y|-\rho n^2$. Using a result from Chv\'atal and Erd\H{o}s~\cite{chvatalerdos}, it is not hard to prove that for every~$\alpha, d> 0$ there is an~$\rho>0$ for which every sufficiently large~$(\rho, d)$-dense~$n$-vertex graph with minimum degree  at least~$\alpha n$ contains a Hamilton cycle. Note that the minimum degree condition can not be dropped, as this notion of~$(\rho,d)$-density does not prevent the graph from having isolated vertices.

There are several ways to extend the notion of~$(\rho, d)$-density to 3-uniform hypergraphs. Here we consider the following three notions that we symbolise by~$\vvv$,~$\ev$, and~$\ee$ (see also~\cites{cherry, Christian, Someremarks, RRS-JCTB}). 

\begin{dfn}\label{def:vvv}
  Let~$\rho$, $d \in (0,1]$ and let~$H$ be a~$3$-uniform hypergraph on~$n$ vertices.
  
  We say that~$H$ is \emph{$(\rho,d, \vvv)$-dense} if for every three sets of vertices~$X,Y,Z$ we have \[e(X,Y,Z) = |\{(x,y,z)\in X\times Y\times Z\colon  \{x,y,z\}\in E(H)\}| \geq d |X||Y||Z|- \rho n^3.\]
  
  We say that~$H$ is \emph{$(\rho,d,\ev)$-dense} if for every set of vertices~$X$ and every collection of pairs of vertices~$P\subseteq V\times V$ we have \[e(X, P) = |\{(x,(y,z))\in X\times P\colon \{x,y,z\}\in E(H)\}| \geq d |X||P|- \rho n^3.\]
  
  We say that~$H$ is \emph{$(\rho,\varepsilon, \ee)$-dense} if for every two collections of pairs of vertices~$P, Q\subseteq V\times V$ we have \[e(P,Q) = |\{((x,y),(y,z))\in P\times Q \colon \{x,y,z\}\in E(H)\}| \geq d |K_{\ee}(Q,P)|- \rho n^3,\]
  where~$K_{\ee}(Q,P)=\{((x,y),(y,z)) \in P\times Q\}$.
\end{dfn}

Observe that~$\vvv$ is the weakest notion and~$\ee$ is the strongest (see~\cite{RRS-JCTB} for details). Our main result concerns~$\ev$-dense hypergraphs. We consider this notion as a \textit{localised codegree condition} 
since it implies that for every linear sized set~$X$ most pairs of vertices will have the 
same proportion of neighbours in~$X$ as in the whole hypergraph.

We are interested in (asymptotically) optimal assumptions for~$\ev$-dense
hypergraphs to ensure Hamilton cycles. This line of research can be traced back to the work of Lenz, Mubayi and Mycroft~\cite{mubayi}, who proved that for arbitrarily small~$d, \alpha>0$ there is an~$\rho>0$ such that every sufficiently large~$(\rho, d, \vvv)$-dense~$n$-vertex hypergraph with minimum degree~$\alpha n^2$ contains a loose Hamilton cycle (in fact they proved this result for~$r$-uniform hypergraphs for~$r\geq2$). As this density condition is the weakest one, this theorem implies the same result for the stronger notions~$\ev$ and~$\ee$. 

Aigner-Horev and Levy~\cite{cherry} proved the same conclusion for tight cycles, but considering minimum codegree conditions instead of vertex degrees and assuming the strongest density notion~$\ee$. More precisely, they proved that for every~$d, \alpha>0$ there is a~$\rho>0$ such that every sufficiently large~$(\rho,d, \ee)$-dense hypergraph with minimum codegree~$\alpha n$ contains a tight Hamilton cycle. 
It turns out that for the~$\ev$-density an analogous result is not possible due the following counterexample. 

\begin{example}\label{ex:lbb}\rm
Let~$G$ be a random graph~$G_{n-2,1/2}$ and define a $3$-uniform hypergraph on the same set of vertices for which a triple of vertices is a hyperedge, if it forms a triangle in~$G$ or in~$\overline G$. Observe that every tight cycle in~$H$ can only use edges, all of which induce triangles in~$G$ or they induce only triangles en~$\overline G$. 
Finally, add two new vertices~$x,y$ in such a way that~$N_H(x) = E(G)$ and~$N_H(y)=E(\overline G)$. Then~$x$ is covered only by cycles induced by triangles in~$G$ and~$y$ is covered only by cycles induced by triangles in~$\overline G$. Hence~$H$ contains no tight Hamilton cycle. Obviously, adding all the edges containing the pair~$\{x,y\}$, the hypergraph~$H$ only yields a tight Hamilton path, but not a tight Hamilton cycle. One can show  for every that~$\rho>0$ with high probability~$H$ is~$(\rho, 1/4,\ev)$-dense and it has 
minimum degree~$(1/4-\rho)\binom{n}{2}$ and even minimum codegree~$(1/4-\rho)n$. 
\end{example}

Our main result asserts that the previous example is essentially best possible.

\begin{theorem}\label{maintheorem}
  For every~$\eps>0$ there exist~$\rho>0$ and~$n_0$ such that 
  every~$(\rho,1/4+\eps,\ev)$-dense $3$-uniform hypergraph~$H$ on~$n\geq n_0$ vertices 
  with~$\delta_1(H)\geq \eps \binom{n}{2}$ contains a tight Hamilton cycle.
\end{theorem}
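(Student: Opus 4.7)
\medskip

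\noindent\textbf{Proof plan.} We follow the standard absorption strategy for tight Hamilton cycles, which decomposes the problem into three modular components: a \emph{connecting lemma}, an \emph{absorbing path}, and a \emph{long path covering}. To set the stage, call a pair $(a,b) \in V(H)^2$ \emph{good} if there are $\Omega(n)$ vertices $w$ with $d_H(a,w), d_H(b,w) = \Omega(n)$ and, more importantly, if $(a,b)$ extends to $\Omega(n^t)$ tight paths of each length $t$ up to some constant. A direct averaging using $(\rho,1/4+\eps,\ev)$-density applied with $X=V(H)$ and $P$ ranging over collections of pairs, combined with the minimum degree hypothesis $\delta_1(H)\geq \eps\binom{n}{2}$, shows that all but $o(n^2)$ pairs are good.

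The main obstacle is the \emph{connecting lemma}: for any two good pairs $(a,b)$ and $(c,d)$ (with disjoint supports) there exists a tight path of some bounded length~$\ell$ from $(a,b)$ to $(c,d)$. This is precisely where the threshold $1/4$ from Example~\ref{ex:lbb} enters, and where the slack~$\eps$ is crucial. The natural approach is to work in the auxiliary digraph on pairs where $(x,y)\to(y,z)$ whenever $xyz\in E(H)$, and to show that short walks from $(a,b)$ reach every good target pair. Under $\ev$-density with density $1/4+\eps$, one can iteratively extend: starting from $(a,b)$, grow $\Omega(n^t)$ tight paths of length~$t$, so that their endpairs are spread out enough (in the sense that they cover $\Omega(n^2)$ pairs with large multiplicity). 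Symmetrically one grows paths backwards from $(c,d)$. Finally, applying the $\ev$-density to the set $X$ of possible ``midpoints'' and the collection $P$ of endpairs, one verifies that many of these partial paths can be glued. The $\eps$ surplus is what prevents the forward and backward ``reachable'' sets from landing in complementary pieces, as happens in Example~\ref{ex:lbb}.

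Given the connecting lemma, the \emph{absorption step} proceeds as follows. For a vertex $v$, call an ordered tuple $(a,b,c,d,e)$ a \emph{$v$-absorber} if both $abcde$ and $abvcde$ are tight paths in $H$. Using the minimum vertex degree together with $\ev$-density, we show every~$v$ has $\Omega(n^5)$ absorbers. A probabilistic selection produces a family of $O(n/\log n)$ absorbers, with $\Omega(n/\log n)$ absorbers per vertex, whose supports are pairwise disjoint. Applying the connecting lemma repeatedly, we splice these absorbers into a single short tight path $P_{\mathrm{abs}}$ of length $o(n)$, with good endpairs, such that for every sufficiently small $U\subseteq V\setminus V(P_{\mathrm{abs}})$ there is a tight path on $V(P_{\mathrm{abs}})\cup U$ with the same endpairs as $P_{\mathrm{abs}}$.

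The final assembly uses a small random reservoir $R$ (again chosen so that, via the connecting lemma, most pairs of good pairs can be joined by a short tight path through $R$). A greedy or regularity-based covering argument produces a bounded number of long tight paths covering all of $V\setminus(V(P_{\mathrm{abs}})\cup R)$ except for a tiny leftover set $U$; these paths are then concatenated with $P_{\mathrm{abs}}$ into a single tight cycle using short connectors from $R$, and finally $U$ is swallowed using the absorbing property of $P_{\mathrm{abs}}$, producing a tight Hamilton cycle. The hard step throughout is the connecting lemma; the absorbing, reservoir, and covering steps are standard once connectability of most good pairs is available, and the $1/4+\eps$ assumption is used exclusively, but essentially, there.
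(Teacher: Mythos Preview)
Your high-level decomposition into connecting, absorbing, reservoir, and covering is exactly what the paper does, and you correctly locate the $1/4+\eps$ hypothesis in the connecting step. The genuine gap is in your connecting lemma.

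You propose to grow $\Omega(n^t)$ tight paths forward from $(a,b)$ and backward from $(c,d)$ until the endpair sets are ``spread out'', and then glue via $\ev$-density, asserting that the $\eps$ surplus ``prevents the forward and backward reachable sets from landing in complementary pieces''. But that last clause is precisely the entire difficulty, and you give no mechanism for it. In Example~\ref{ex:lbb} the forward-reachable endpairs from a $G$-pair are all $G$-pairs and the backward-reachable endpairs from a $\overline G$-pair are all $\overline G$-pairs; both sets are spread out of size $\Omega(n^2)$, yet they are disjoint, and a one-sided lower bound like $\ev$-density on some $(X,P)$ does nothing to force them to meet. The paper's argument is quite specific here: one first passes to an $(\eta,\xi)$-regular bipartite set of pairs $P\subseteq V_1\times V_2$, and shows via AM--GM that for any third class $V_3$ the shadows $|\partial H[V_1,V_3]|+|\partial H[V_2,V_3]|\geq(1+\eps)m^2$, hence they overlap; this overflow yields many two-edge ``cherries'' between any two large sets of \emph{unordered} pairs. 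Upgrading to \emph{ordered} pairs requires a second, independent device: $\ev$-density above $1/4$ forces many copies of $K_4^{(3)-}$ (Theorem~\ref{K4-}), and blowups of $K_4^{(3)-}$ serve as ``turns'' that connect a pair to another in all four orientations. Neither the AM--GM overflow nor the orientation-flipping via $K_4^{(3)-}$ appears in your sketch, and both are where the $\eps$ is actually spent.

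A secondary issue concerns your absorbers. The classical $5$-tuple $(a,b,c,d,e)$ with $abcde$ and $abvcde$ both tight requires $\{a,b\},\{b,c\},\{c,d\}\in N(v)$ \emph{and} $abc,bcd\in E(H)$, i.e.\ a tight path in $H$ whose three consecutive pairs all lie in the link of $v$. With a minimum \emph{codegree} assumption this is routine, but with only $\delta_1(H)\ge\eps\binom n2$ and $\ev$-density it is not clear how to get $\Omega(n^4)$ such tuples for \emph{every} $v$: the $\ev$-condition bounds $e(X,P)$ from below but does not control tight paths whose consecutive pairs are simultaneously constrained to a fixed link graph. The paper avoids this by absorbing \emph{triples} rather than single vertices, via copies of $K^{(3)}_{3,3,3}$ (abundant in any dense $3$-graph) together with swap gadgets that only require $|N(v_i)\cap N(y_i)|=\Omega(n^2)$, which does follow directly from $\ev$-density plus $\delta_1$; the resulting divisibility-by-$3$ constraint on the leftover is then fixed by threading a blown-up tight cycle $C_8(4)$ into the absorbing path.
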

We also strengthen a result of Aigner-Horev and Levy~\cite{cherry} by showing that their codegree assumption for tight Hamilton cycles in~$\ee$-dense hypergraphs can be relaxed to a minimum vertex degree assumption.

\begin{theorem}
\label{maintheorem2}
  For every~$d$, $\alpha>0$ there exist~$\rho>0$ and~$n_0$ such that 
  every~$(\rho,d,\ee)$-dense $3$-uniform hypergraph~$H$ on~$n\geq n_0$ vertices 
  with~$\delta_1(H)\geq \alpha \binom{n}{2}$ contains a tight Hamilton cycle.
\end{theorem}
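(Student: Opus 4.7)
The plan is to run the same absorption scheme as for Theorem~\ref{maintheorem}, observing that the threshold $1/4+\eps$ in Theorem~\ref{maintheorem} is forced only by the $\ev$-extremal construction of Example~\ref{ex:lbb}. In the strictly stronger $\ee$-setting no such obstruction survives, and any positive $d$ will do once paired with the minimum vertex-degree hypothesis. I would begin with the elementary observation that every $(\rho,d,\ee)$-dense hypergraph is also $(\rho,d,\ev)$-dense: picking the auxiliary pair set $Q=V\times X$ in the $\ee$-definition gives $e(P,Q)=e(X,P)$ and $|K_{\ee}(Q,P)|=|X||P|$. Consequently, if $d\ge 1/4+\eps$ then Theorem~\ref{maintheorem} applies directly, and the remaining task is to handle small~$d$.

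\textbf{Adapting the four building blocks.} For small $d$ I would re-run the absorption proof of Theorem~\ref{maintheorem} with each appeal to $\ev$-density at level $1/4+\eps$ replaced by an appeal to $\ee$-density at level~$d$. The four standard building blocks are: (i) an absorbing tight path~$P_A$ of length $O(n)$ that can swallow any small leftover set~$U$; (ii) a random reservoir~$R$ of linear size that inherits $\ee$-density and minimum vertex degree with high probability; (iii) a cover of almost all of $V\smallsetminus(V(P_A)\cup R)$ by a bounded number of long tight paths, produced greedily using $\delta_1$ and $\ee$-density; and (iv) a connecting lemma that glues all the pieces together through~$R$. The connecting lemma becomes almost automatic in the $\ee$-setting: iterating the definition $k$ times shows that any two endpoint pairs whose codegrees into~$R$ are $\Omega(|R|)$ are linked by $\Omega(|R|^k)$ tight paths of length~$k$, with no $1/4$ barrier anywhere in the chain. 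Assembling $P_A$, the long paths, and the connectors produces a tight cycle on $V\smallsetminus U$, and the absorbing property of $P_A$ completes it to a tight Hamilton cycle.

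\textbf{Main obstacle.} The delicate point is the construction of $P_A$ under a vertex-degree rather than a codegree hypothesis. Aigner-Horev and Levy exploit that every pair has codegree $\Omega(n)$; here some pairs may have negligible codegree. The plan is to identify the \emph{good} pairs, those of codegree $\Omega(n)$---a set of size $\Omega(n^2)$, guaranteed by a convexity/averaging argument from $\delta_1(H)\ge\alpha\binom{n}{2}$---and to design the absorbing gadgets so that each vertex~$v$ is absorbed through its link graph~$L_v$ using only good pairs. Since $|E(L_v)|\ge\alpha\binom{n}{2}$, $\ee$-density supplies linearly many continuations on both sides of each absorber, and a standard counting argument then yields a single absorbing path that simultaneously absorbs every~$v$. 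Once $P_A$ is in hand, the remaining steps are routine adaptations of the argument for Theorem~\ref{maintheorem}.
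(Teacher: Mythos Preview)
Your overall architecture---absorption method with the same four building blocks, plus the observation that $\ee$-density implies $\ev$-density---matches the paper. However, your diagnosis of where the work lies is inverted relative to what actually happens.

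You flag the absorbing path as the ``main obstacle'' and propose to redesign the absorbers around good pairs in the link graph of each vertex. In the paper no redesign is needed: Lemma~\ref{abs} already shows that every triple has $\Omega(n^{21})$ absorbers assuming only positive $\ev$-density together with $\delta_1(H)\ge\eps n^2$; the $1/4$ threshold never enters that lemma. So the absorber count transfers verbatim to the $\ee$-setting, and your proposed reconstruction of $P_A$ is unnecessary.

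What you do not mention is the one place the paper's proof of Theorem~\ref{maintheorem} genuinely uses $\ev$-density above $1/4$ inside the absorbing-path argument: the divisibility fix via $C_8(4)$ (Lemma~\ref{blowupcycle}), needed because the absorbers from Definition~\ref{defabs} swallow vertices three at a time. The paper does not try to reprove Lemma~\ref{blowupcycle} under $\ee$-density; instead it strengthens the Connecting Lemma to one (Lemma~\ref{Connecting_Lemma2}) in which the number of inner vertices may be chosen freely in $\{5,6,7\}$. That freedom lets one adjust the residue of $|V(C)|$ modulo~$3$ while building the almost-spanning cycle, so the $C_8(4)$ gadget is simply dropped. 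Your sketch of the connecting lemma (``iterate the definition $k$ times'') is close in spirit, but you should note explicitly that control over the connection length is what replaces Lemma~\ref{blowupcycle}; otherwise your absorbing path, built from triple-absorbers, cannot handle a leftover $U$ whose size is not a multiple of~$3$. If instead you intend single-vertex absorbers, you would need to spell out why $\Omega(n^4)$ paths $abcd$ with $ab,bc,cd\in N_H(v)$ and $abc,bcd\in E(H)$ exist for every $v$; that is not quite ``standard'' under a vertex-degree hypothesis alone.
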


Theorem~\ref{maintheorem2} was conjectured in~\cite{cherry} and was obtained independently 
in~\cite{GH}. 
The main purpose of this paper is proving Theorem~\ref{maintheorem}. The proof of Theorem~\ref{maintheorem2} 
is based on similar ideas and we discuss the details in Section~\ref{SectionCherry}.

The rest of the paper is organised as follows. In Section 2 we recall the Absorption Method and introduce its three main parts the \textit{Almost Covering Lemma}, the \textit{Connecting Lemma} and the \textit{Absorbing Path Lemma}.
The proof of those lemmas are given in Sections~\ref{SectionAC},~\ref{SectionConnecting}, and~\ref{SectionAbsorbingPath}. 
In Section~\ref{SectionPreliminary} we collect some preliminary observations. 
In Section~\ref{SectionCherry} we discuss the necessary changes to the main proof in order to prove Theorem~\ref{maintheorem2}. 
We close with a few concluding remarks in Section~\ref{SectionFurtherRemarks}.

\section{Absorption Method}
\label{SectionOverview}

In~\cite{RRSz}, R\"odl, Ruci\'nski and Szemer\'edi introduced the Absorption Method, which turned out to be a very useful approach for embedding spanning cycles in hypergraphs. This method reduces the problem to finding an almost spanning cycle with a small special path in it, called the \textit{absorbing path}. The absorbing path~$A$ can absorb any small set of vertices into a new bigger path, with the same ends as~$A$, completing the almost spanning cycle into a Hamilton cycle. 

The almost spanning cycle will be composed from smaller tight paths, which will be connected to longer paths. 
For that it would be useful if any given two pairs of vertices~$(x,y)$ and~$(w,z)$, 
being the ends of such smaller paths, can be connected by a short tight path.
However, in view of the assumptions of Theorem~\ref{maintheorem},
it is easy to see that not \textit{any} pair of pairs can be connected in this way (in particular, there could be pairs with codegree zero). 
For that we introduce the following notion of \textit{connectable pairs} and we will show that for those pairs there actually exist tight connecting paths between them (see Lemma~\ref{Connecting_Lemma} below).

\begin{dfn}\label{ConnectablePair}
Let~$H=(V,E)$ be a hypergraph. We say that~$(x,y)\in V\times V$ is \emph{$\beta$-connectable in~$H$} if the set~$$Z_{xy}=\{z\in V\colon xyz \in E(H) \text{ and } d(y,z)\geq \beta |V|\},$$ has size at least~$\beta |V|$.
Moreover, we say that an~$(a,b)$-$(c,d)$-path is~\emph{$\beta$-connectable} if the pairs~$(b,a)$ and~$(c,d)$ are~$\beta$-connectable. 
\end{dfn}
Observe that the starting pair of the path is asked to be~$\beta$-connectable
in the inverse direction that as it appears in the path.

The proof of Theorem~\ref{maintheorem} splits into three lemmas. Let~$H=(V,E)$ be a~$(\rho,1/4+\varepsilon, \ev)$-dense hypergraph on~$n$ vertices, with~$1/n\ll \rho \ll \varepsilon$. First we prove that such hypergraphs can be almost covered by a collection of `few' tight paths. We remark that this is even true under the weaker assumption of non-vanishing~$\vvv$-density. A straight forward proof is presented in Section~\ref{SectionAC}.

\begin{lemma}[Almost Covering Lemma]\label{AC}
For all~$d, \gamma \in (0,1]$ there exist~$\rho, \beta>0$, and~$n_0$ such that in every~$(\rho, d, \vvv)$-dense hypergraph~$H$ on~$n\geq n_0$ vertices
there exists a collection of at most~$1/\beta$ disjoint~$\beta$-connectable paths, that cover all but at most~$\gamma^2 n$ vertices of~$H$.
\end{lemma}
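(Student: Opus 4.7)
The strategy is to first identify a large ``robust'' subset $W \subseteq V$ in which all pairs have good codegree properties, and then to build the collection of $\beta$-connectable tight paths greedily inside $W$. Vertices of $V \setminus W$ will be left uncovered, which is permissible because I will ensure $|V \setminus W| \leq \gamma^2 n$. This reduces the problem to two sub-steps: establishing $W$ from the $\vvv$-density assumption, and then iterating a single-path extraction inside $H[W]$.

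\textbf{The robust subset.} Under the assumption $\rho \ll d\gamma^6$, repeated applications of $\vvv$-density to ``bad'' subsets should give: the number of vertices with degree below $dn^2/4$ is at most $O(\rho/d)\,n$, and the number of ordered pairs $(u,v)$ with $d_H(u,v)$ below some threshold $\beta' n$ is at most $\gamma^2 n^2 / 3$. Removing the low-degree vertices and the vertices incident to too many low-codegree pairs produces a set $W$ with $|V \setminus W| \leq \gamma^2 n / 2$ inside which every vertex has high degree and all but a tiny fraction of pairs have codegree $\geq \beta' n$. A short double-counting then shows that every pair $(x,y) \in W \times W$ with $d_H(x,y) \geq \beta' n$ is actually $\beta$-connectable for a suitable $\beta > 0$ depending on $\beta'$, since most codegree-neighbours $z$ of such a pair give $(y,z)$ also of high codegree.

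\textbf{Greedy iteration inside $W$.} Starting from a $\beta$-connectable edge in $W$, I extend a tight path at one end iteratively: the current end pair $(v_{k-1}, v_k)$ being $\beta$-connectable yields $|Z_{v_{k-1}v_k}| \geq \beta n$ candidates, and since inside $W$ almost all pairs are $\beta$-connectable, almost all of these candidates produce a new $\beta$-connectable end pair. Avoiding the $k$ already used vertices remains possible as long as $k$ is below a suitable threshold, so I obtain a tight path of length at least $\beta n$ both of whose end pairs are $\beta$-connectable. Removing its vertices from $U := W$ and repeating in $H[U]$ (which inherits $\vvv$-density with slightly worse $\rho$) until $|U| < \beta n$ yields a collection of at most $1/\beta$ disjoint $\beta$-connectable tight paths, while the total uncovered set $V \setminus (W \cap \text{covered})$ has size at most $\gamma^2 n / 2 + \beta n \leq \gamma^2 n$.

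\textbf{Main obstacle.} The technical heart of the argument is the bound on the number of low-codegree pairs inside $W$. The $\vvv$-density assumption only controls $e(X,Y,Z)$ aggregated over linear-sized sets and does not give any pointwise codegree information, so the abundance of high-codegree pairs is not immediate. I expect to proceed as follows: if the set $B$ of pairs with codegree below $\beta' n$ has size $\geq \delta n^2$ with $\delta \gg \sqrt{\rho/d}$, then a standard dense-subgraph extraction (by iteratively pruning low-degree vertices in the auxiliary graph $G_B$) produces linear-sized $X, Y \subseteq V$ for which $B \cap (X \times Y)$ has constant density, and applying $\vvv$-density to the triple $(X, Y, V)$ contradicts the low codegrees contributed by $B$. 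The bookkeeping is exactly the point where the hierarchy $\rho \ll \beta \ll d, \gamma$ arises, which is the quantitative content of the lemma.
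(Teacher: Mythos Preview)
Your approach has a genuine gap at precisely the point you flag as the ``main obstacle''. The claim that $(\rho,d,\vvv)$-density forces the set $B$ of low-codegree pairs to have size $o(n^2)$ is simply false. Take $G=G(n,1/2)$ and let $H$ consist of the triangles of $G$; then $H$ is $(\rho,1/8,\vvv)$-dense with high probability, yet every non-edge of $G$ --- roughly $n^2/4$ pairs --- has codegree zero in $H$. Your proposed resolution does not produce a contradiction: applying $\vvv$-density to $(X,Y,V)$ gives $e(X,Y,V)\geq d|X||Y|n-\rho n^3$, while the trivial upper bound $e(X,Y,V)\le |X||Y|n - (1-\beta')\,|B\cap(X\times Y)|\,n$ only shows that the density of $B$ in $X\times Y$ is at most about $1-d$, not that it is small. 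Consequently the assertion ``inside $W$ almost all pairs are $\beta$-connectable'' is unfounded, and your greedy extension --- which needs most candidates $z\in Z_{v_{k-1}v_k}$ to yield a connectable pair $(v_k,z)$ --- is not justified.

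The paper circumvents this obstacle entirely by cleaning \emph{edges} rather than vertices or pairs. Given any leftover set $X$ with $|X|\ge\gamma^2 n$, one iteratively deletes from $H[X]$ every edge containing a pair whose current codegree is below $\beta n$; at most $\beta n\cdot\binom{n}{2}\le \beta n^3$ edges are removed, \emph{irrespective of how many low-codegree pairs there are}. Since $e(X)\ge d|X|^3/6-\rho n^3$, for $\rho=\beta=d\gamma^6/13$ the cleaned hypergraph is nonempty, and in it every pair of positive codegree has codegree at least $\beta n$ and is therefore automatically $\beta$-connectable in $H$. A tight path on $\beta n$ vertices can now be built greedily, and iterating on the uncovered set gives at most $1/\beta$ disjoint $\beta$-connectable paths missing at most $\gamma^2 n$ vertices.
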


Next we discuss how to find an absorbing path, which contains a collection of several smaller structures, called
\textit{absorbers}. For~$v\in V$, we call~$A_v \subseteq H$ an absorber for~$v$ if both~$A_v$ and~$A_v\cup \{v\}$ span tight
paths with same ends (we say that~$A_v$ absorbs~$v$). 
The main difficulty is to define the absorbers in such a way that we can prove that every vertex is contained in many of them. 
In Section \ref{SectionAbsorbingPath} we see that the absobers considered here are in fact more complicated and absorb sets of three vertices instead of one. This leads to a divisibility issue which we consider separately in Lemma \ref{cyclepath}. 
Going further, we can find a relatively small collection of tight paths which can absorb any sufficiently small given set of vertices. 
After finding this collection we connect them together to form one tight path with the absortion property described in the following lemma. 

\begin{lemma}[Absorbing Path Lemma]\label{AbsorbingPathLemma}
For every~$\eps>0$ there exist~$\rho, \beta, \gamma'>0$ and~$n_0$ such that the following is true for every positive~$\gamma\leq \gamma'$ and every~$(\rho, 1/4 + \eps,\ev)$-dense hypergraph~$H=(V,E)$ on~$n\geq n_0$ vertices with~$\delta_1(H)\geq \eps n^2$. 

For every~$R\subseteq V$, with~$|R|\leq 2\gamma^2n$, there exists a tight~$\beta$-connectable path~$A$ satisfying $V(A)\subseteq V\smallsetminus R$  and~$|V(A)|\leq \gamma n$, such that for every~$U\subseteq V(H)\setminus A$ with~$|U|\leq 3\gamma^2n$, the hypergraph~$H[V(A)\cup U]$ has a tight path with the same ends as~$A$.
\end{lemma}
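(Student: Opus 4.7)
My plan follows the absorption method of R\"odl, Ruci\'nski and Szemer\'edi, with the twist (flagged in the excerpt) that each absorber absorbs a \emph{triple} of vertices rather than a single one; this accounts for the factor $3$ in the bound $|U|\leq 3\gamma^2 n$. I would first fix a small integer $k$ and call a tight path $P$ on $k$ vertices an \emph{absorber for an ordered triple} $(u,v,w)\in V^3$ if its starting and ending pairs are $\beta'$-connectable for some fixed $\beta'=\beta'(\eps)>0$, and $V(P)\cup\{u,v,w\}$ spans a tight path with exactly the same starting and ending pairs. The core structural claim is that every triple $(u,v,w)\in V^3$ admits at least $\Omega(n^k)$ absorbers in~$H$. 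I expect this to be proved by a layered pair-counting argument: for each of the finitely many edges required by a candidate absorber one applies $(\rho,1/4+\eps,\ev)$-density on well-chosen sets of pairs, discarding pairs of low codegree via the minimum degree assumption $\delta_1(H)\geq \eps n^2$. The $+\eps$ slack is essential here, since Example~\ref{ex:lbb} shows that no such abundance statement can hold at density $1/4$.

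With abundance in hand, I would apply a standard probabilistic argument to obtain a pruned family $\mathcal F$ of pairwise vertex-disjoint absorbers, all avoiding $R$, with $|\mathcal F|\leq \gamma n/(2k)$ and such that for every triple $(u,v,w)\in V^3$ at least $2\gamma^2 n$ members of $\mathcal F$ are absorbers for $(u,v,w)$. Concretely, one includes each absorber independently with probability of order $\gamma/n^{k-1}$; a first-moment together with a Chernoff estimate yields all three properties with positive probability, because the expected number of intersecting pairs of absorbers and the expected number of absorbers meeting $R$ are both $o(\gamma n)$ and can be removed in a clean-up step (here the bound $|R|\leq 2\gamma^2 n$ is used). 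I would then concatenate $\mathcal F$ into a single tight path $A$ by repeated application of the Connecting Lemma (Lemma~\ref{Connecting_Lemma}), always avoiding $R$ and previously used vertices; since all endpoint pairs of members of $\mathcal F$ are $\beta'$-connectable and $|\mathcal F|=O(\gamma n)$, each connection uses a bounded number of fresh vertices, so $V(A)\subseteq V\setminus R$ and $|V(A)|\leq \gamma n$. Choosing the outermost endpoints of $A$ among endpoint pairs of absorbers ensures that $A$ itself is $\beta$-connectable for any $\beta\leq \beta'$.

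Finally, for the absorption property, given $U\subseteq V\setminus V(A)$ with $|U|\leq 3\gamma^2 n$, I would split (most of) $U$ into triples and greedily assign each triple to a distinct absorber in $\mathcal F$ absorbing it; this is possible because $\mathcal F$ contains more than $|U|/3$ absorbers for every triple. Replacing each used absorber by its extended tight path yields a tight path on $V(A)\cup U$ with the same endpoints as $A$. The case where $|U|$ is not a multiple of $3$ is handled via Lemma~\ref{cyclepath}, which permits an $O(1)$-local modification of $A$ shifting its vertex count by one or two. The \textbf{main obstacle} is Step~1, the structural abundance of triple-absorbers: the tight constant $1/4$ leaves no slack, and the argument has to combine the $+\eps$ margin of $\ev$-density with the minimum-degree hypothesis at every stage of the absorber construction, filtering out at each step the $o(n)$ vertices and $o(n^2)$ pairs along which density control fails.
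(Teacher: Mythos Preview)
Your overall architecture (abundance of triple-absorbers, random sparsification, deletion of overlapping pairs, concatenation via the Connecting Lemma, a separate gadget for the divisibility issue) matches the paper's. However, you have mislocated the main difficulty.

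In the paper, the abundance of absorbers (Lemma~\ref{abs}) requires only $(\rho,d,\ev)$-density for an arbitrary $d>0$, not $d>1/4$. The absorber is a completely explicit $21$-vertex gadget: a labelled copy of $K^{(3)}_{3,3,3}$ (which exists in profusion by supersaturation, since any $3$-partite hypergraph has Tur\'an density~$0$) together with, for each of the three designated ``middle'' vertices $y_i$ of that copy, a $4$-vertex path in the link graph $N_H(y_i)\cap N_H(v_i)$. The minimum-degree assumption and a single application of $\ev$-density guarantee that $|N_H(y_i)\cap N_H(v_i)|=\Omega(n^2)$ for all but $O(\sqrt{\rho}\,n)$ choices of $y_i$, and then Blakley--Roy gives $\Omega(n^4)$ paths. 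No ``layered pair-counting'' using the $+\eps$ margin is involved, and Example~\ref{ex:lbb} is not an obstruction here: that hypergraph has plenty of absorbers; what it lacks is connectivity.

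The places where the paper genuinely needs $\ev$-density above $1/4$ are exactly the two you treat as routine: (a) the concatenation step, which invokes the Connecting Lemma, and (b) the divisibility correction, which in the paper is done not by an $O(1)$ local modification but by finding $\Omega(n^{32})$ copies of the blow-up $C_8(4)$ (Lemma~\ref{blowupcycle}); proving this uses Theorem~\ref{K4-} and Lemma~\ref{cherries}, both of which rely on the $1/4+\eps$ hypothesis. Note also that the paper's absorber is \emph{not} one tight path but four disjoint tight paths (the $6$-path $x_1x_2x_3z_1z_2z_3$ and the three $5$-paths $a_ib_iy_ic_id_i$); the Connecting Lemma is applied $4|\mathcal F|$ times, not $|\mathcal F|-1$ times, to stitch them all together, and connectability of the endpoints is arranged by first passing to the subhypergraph $H_\beta$ of Lemma~\ref{cleaning} and only counting absorbers whose internal edges lie in $H_\beta$.
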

The set of vertices~$R$ in Lemma~\ref{AbsorbingPathLemma} will act as a reservoir of vertices that will be used later 
for connecting the tight paths mentioned in Lemmas~\ref{AC} and~\ref{AbsorbingPathLemma},
without interfering with the vertices already used by those tight paths.

The next lemma justifies Definition~\ref{ConnectablePair} and shows that between every two~$\beta$-connectable pairs there exist several short tight paths connecting them.
As it was said before, this is used for connecting the absorbers in the proof of Lemma~\ref{AbsorbingPathLemma}.
Moreover, observe that all tight paths mentioned in Lemma~\ref{AC} and~\ref{AbsorbingPathLemma} are~$\beta$-connectable. 
This allows us to connect them together into an almost spanning cycle 
and the absorbing path in this cycle will absorb all the remaining vertices 
to complete the Hamilton cycle.

\begin{lemma}[Connecting Lemma]\label{Connecting_Lemma}
For every~$\eps, \beta>0$ there exist~$\rho,\alpha>0$ and~$n_0$ such that for
every~$(\rho, 1/4+\eps, \ev)$-dense hypergraph~$H$ on~$n\geq n_0$ vertices the following holds. 

For every pair of disjoint ordered~$\beta$-connectable pairs of vertices 
$(x,y), (w,z) \in V\times V$ there exists an integer~$\ell \leq 15$ such that the number of~$(x,y)$-$(z,w)$-paths with~$\ell$ inner vertices is at least~$\alpha n^{\ell}$ 
\end{lemma}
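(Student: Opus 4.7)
The strategy is a meet-in-the-middle argument in the auxiliary digraph $D$ on $V \times V$ where $(a,b) \to (b,c)$ is an arc iff $\{a,b,c\} \in E(H)$; tight paths in $H$ correspond to directed walks in $D$, so the task reduces to counting walks of length $\ell + 2 \le 17$ from $(x,y)$ to $(z,w)$. The $\beta$-connectability immediately supplies $\ge \beta n$ choices of $u_1 \in Z_{xy}$ making $(y, u_1)$ a \emph{good pair} (codegree $\ge \beta n$), and symmetrically $\ge \beta n$ choices of $u_\ell \in Z_{wz}$ producing a good pair $(u_\ell, z)$. Peeling these off contributes a factor $\Omega(n^2)$ and reduces the problem to counting walks in $D$ between two arbitrary good pairs.

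The core tool is iterated application of the $(\rho, 1/4 + \varepsilon, \ev)$-density. Starting from a good pair $(a, b)$ with $W_k(p,q)$ denoting the number of length-$k$ walks in $D$ from $(a,b)$ to $(p,q)$, applying $\ev$-density with $X = V$ and $P$ equal to the support of $W_{k-1}$ yields the walk-sum recursion $\sum W_k \ge (1/4 + \varepsilon/2)\, n \sum W_{k-1}$, provided the support is large enough to dominate the $\rho n^3$ error. Iterating gives $\sum_{(p,q)} W_k(p,q) \ge \beta n \cdot ((1/4 + \varepsilon/2)\, n)^{k-1}$, and crucially the second-step support satisfies $|\mathrm{supp}(W_2)| = \sum_{c \in N(a,b)} d(b,c) \ge (1/4 + \varepsilon/2)\, \beta n^2$, since $\mathrm{supp}(W_2)$ is the disjoint union $\bigsqcup_{c \in N(a,b)} \{c\} \times N(b,c)$ and the total is estimated by one direct call to $\ev$-density.

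For the meeting step, I split the length-$\ell$ walk between the two good pairs at an internal hyperedge and write
\[
  N_\ell \;=\; \sum_{p,\, q,\, r} W^{\mathrm{fw}}_{k_1}(p, q) \cdot \mathbf{1}\{pqr \in E(H)\} \cdot W^{\mathrm{bw}}_{k_2}(q, r),
\]
with $k_1 + k_2 + 1 = \ell$. A final use of $\ev$-density on the middle indicator, restricting $(p, q)$ to the weighted pair-set where $W^{\mathrm{fw}}_{k_1}$ is supported and applying it in the presence of the backward weights, lower-bounds this bilinear form by $\alpha' n^\ell$ for suitable choices $k_1, k_2 \le 6$ (so $\ell \le 13$). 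Combined with the $\Omega(n^2)$ endpoint factor, this gives the desired $\alpha n^\ell$ paths with $\ell + 2 \le 15$.

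The main obstacle is the meeting step: $\ev$-density bounds linear sums of codegrees, not bilinear expressions like $\sum f(p,q)\, \mathbf{1}\{pqr \in E\}\, g(q,r)$, and controlling the interaction of the forward and backward walk functions is delicate. The strict inequality $d > 1/4$ is essential, as Example~\ref{ex:lbb} shows that at $d = 1/4$ the hypergraph can split into two ``types'' on which forward and backward reachable sets are disjoint, breaking meet-in-the-middle; the $\varepsilon$ slack is precisely what the final density application needs to overcome this obstruction.
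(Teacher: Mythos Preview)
Your meet-in-the-middle outline has a genuine gap at exactly the place you flag as the ``main obstacle,'' and the gap is not repaired by the sentence that follows it. The bilinear form
\[
  N_\ell \;=\; \sum_{p,q,r} W^{\mathrm{fw}}_{k_1}(p,q)\,\mathbf{1}\{pqr\in E(H)\}\,W^{\mathrm{bw}}_{k_2}(q,r)
\]
is precisely the quantity that $\ee$-density controls (two pair-sets glued along a common vertex), not $\ev$-density. Your proposed fix, ``restricting $(p,q)$ to the forward support and applying $\ev$-density in the presence of the backward weights,'' does not work: for each fixed $q$ one can indeed invoke $\ev$-density with $X=\{p:(p,q)\in\mathrm{supp}\,W^{\mathrm{fw}}\}$ and the pair set $\{(q,r):(q,r)\in\mathrm{supp}\,W^{\mathrm{bw}}\}$, but summing over $q$ only yields a lower bound in terms of $\sum_q |X_q|\,|Y_q|$, and nothing in your argument prevents $|X_q|$ and $|Y_q|$ from being anti-correlated. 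Indeed, your walk-growth recursion $\sum W_k\ge (1/4+\eps/2)\,n\sum W_{k-1}$ is equally valid at density exactly $1/4$, so the iteration itself cannot be where the threshold is crossed; yet Example~\ref{ex:lbb} shows that at $1/4$ the forward and backward supports can live in disjoint ``colour classes'' and never meet. Your claim that ``the $\eps$ slack is precisely what the final density application needs'' is therefore unsubstantiated: you have not identified any step that distinguishes $1/4+\eps$ from $1/4$.

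The paper's proof exploits $d>1/4$ in two structurally different ways that your outline lacks. First, an AM-GM argument (Lemma~\ref{025}) turns density $>1/4$ into a shadow bound of the form $|\partial H[V_1,V_3]|+|\partial H[V_2,V_3]|\ge(1+\eps)m^2$; the point is that $(1+\eps)$ exceeds $1$, so two such shadows must overlap on $\Omega(m^2)$ pairs, yielding many cherries between any two large sets of \emph{unordered} pairs. Second, to handle the orientation problem (your forward walks impose an order on the meeting pair that need not match the backward order), the paper invokes the Glebov--Kr\'al'--Volec theorem that $\vvv$-density $>1/4$ forces many copies of $K_4^{(3)-}$, whose blowups contain short tight paths realising all four orientations of their end-pairs. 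Neither of these ideas is present in your sketch. It is worth noting that your approach is essentially correct for the $\ee$-dense setting of Theorem~\ref{maintheorem2} (compare Section~\ref{sec:CLee}), which confirms that the missing ingredient is exactly the passage from $\ev$ to $\ee$ control at the meeting step.
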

In view of the construction given in Example~\ref{ex:lbb},
one can see that the~$1/4$ in the~$\ev$-density assumption in Lemma~\ref{Connecting_Lemma} cannot be dropped. In that example, there are two classes of pairs that cannot be connected by a tight path (namely the pairs in~$G$ and in~$\overline{G}$), although they are~$\beta$-connectable.
Hence,~$\ev$-density of at least~$1/4$ is required for Lemma \ref{Connecting_Lemma}.

Also Lemma~\ref{AbsorbingPathLemma} requires~$\ev$-density bigger than~$1/4$. In the proof of Lemma~\ref{AbsorbingPathLemma} this assumption will be crucial 
for connecting the so-called absorbers to a tight path, which makes use of Lemma~\ref{Connecting_Lemma}. Moreover, the type of absorbers used here, leads to a `divisibility issue'. It is addressed in Lemma~\ref{cyclepath} for which we also employ the same density assumption.

We now deduce Theorem~\ref{maintheorem} from Lemmata~\ref{AC}\,--\,\ref{Connecting_Lemma}.

\begin{proof}[Proof of Theorem~\ref{maintheorem}]
Given~$\eps>0$ we apply Lemma~\ref{AbsorbingPathLemma} and obtain~$\rho_1$,~$\beta_1$ and~$\gamma'$. 
Lemma~\ref{AC} applied with~$d=1/4$ and~$\gamma = \min\{\gamma', \eps/2\}$ yields~$\rho_2$ and~$\beta_2$. 
Applying Lemma~\ref{Connecting_Lemma} with~$\eps$ and 
$$\beta = \frac{1}{8}\min\{\beta_1, \beta_2\},$$
reveals~$\alpha$ and~$\rho_3$. 
Finally we set 
$$\rho = \min\{\rho_1, \rho_2/8, \rho_3\},$$ and~$n$ be sufficiently large.
Having fixed all constants, let~$H$ be a~$(\rho, 1/4+\eps, \ev)$-dense hypergraph on~$n$ vertices.

We consider a random set~$R\subseteq V$, in which each vertex is present independently with probability~$\gamma^2$. 
For every positive integer~$\ell\leq15$ consider two pairs~$(x,y), (w,z) \in V\times V$ between which there are at least~$\alpha n^{\ell}$ paths with~$\ell$ inner vertices. Let~$Y=Y(\ell, (x,y),(z,w))$ count the number of such paths whose inner vertices are contained in~$R$. We point out that~$Y$ is a function determined by~$n$ independent random variables, each of which can influence the value of~$Y$  by at most~$n^{\ell-1}$. Therefore a standard application of Azuma's inequality (see~\cite{randomgraphs}*{Section~2.4}) implies that
\begin{equation}
	\label{conecR}
		\mathbb{P}\left(Y  \leq \dfrac{\gamma^{2\ell}}{2} \cdot \alpha n^{\ell} \right) = \exp(-\Omega(n))< \dfrac{1}{2}\cdot\dfrac{1}{15n^4},
\end{equation}

\noindent for any fixed~$\ell$,~$(x,y)$, and~$(w,z)$. Moreover, by Markov's inequality we have that
\begin{equation}
	\label{sizeR}
\mathbb{P}\big( |R| \geq 2\gamma^2 n \big) \leq \dfrac{1}{2}.
\end{equation}

Therefore there exists a realisation of~$R$, which from now on will take over the name~$R$, that is not in the event considered in~\eqref{sizeR} and in any of the events considered in~\eqref{conecR} 
(all~$4$-tuples of vertices and values of~$\ell$). 
Since~$\gamma'<\gamma$,~$\rho<\rho_1$, and~$|R|<2\gamma^2n$, Lemma~\ref{AbsorbingPathLemma} ensures that we can find a~$\beta_1$-connectable absorbing path~$A$ of size smaller than~$\gamma n$ and  which does not intersect~$R$.

Let~$V'=V\smallsetminus (V(A)\cup R)$. Since~$|V(A)\cup R|\leq 3\gamma n \leq n/2$, the induced hypergraph~$H[V']$ is~$(8\rho, 1/4+\epsilon,\ev)$-dense. In particular,~$H[V']$ is~$(8\delta,1/4+\eps, \vvv)$-dense and since~$8\rho \leq \rho_2$, Lemma~\ref{AC} implies that there exists a collection of at most~$1/\beta_2$ paths with~$\beta_2$-connectable ends in~$H[V']$ that cover all but at most~$\gamma^2n$ vertices. 

Set~$t=\lfloor1/\beta_2+1\rfloor$ and let~$(P_i)_{i\in [t]}$ be any cyclic ordering of such paths together with the absorbing path. Assume that we were able to find connections in~$R$ between the paths~$P_1, P_2,\dots, P_i$, using inner vertices from~$R$ only. Moreover, we make sure that each connection is made with at most~$15$ inner vertices. Let~$C_i$ be the path that begins with~$P_1$ and ends in~$P_i$ using those connections. Therefore
\[|V(C_i)\cap R| \leq t \cdot 15 = o(n).\]

Now, we want to show that we can connect~$P_i$ with~$P_{i+1}$ to construct~$C_{i+1}$. Observe that all the tight paths from~$(P)_{i\in[t]}$ are~$\beta$-connectable. This follows from the choice~$\beta\leq\beta_1$ for the absorbing path~$A$. From the paths given by Lemma~\ref{AC} we know that they are~$\beta_2$-connectable in~$H[V']$. Owing to~$\beta\leq \beta_2/2$ and~$|V'|\geq n/2$ the~$\beta$-connectibility follows. 

Let~$(x_i,y_i)$ be the ending pair of~$P_i$ and~$(z_i,w_i)$ the starting pair~$P_{i+1}$. Lemma \ref{Connecting_Lemma} implies that, for some~$\ell_i\leq 15$, there exist at least~$\alpha n^{\ell_i}$ tight~$(x_i,y_i)$-$(z_i,w_i)$ paths, each with~$\ell_i$ inner vertices. By the choice of~$R$, the number of~$(x_i,y_i)$-$(z_i,w_i)$ paths of length~$\ell_i+2$ whose inner vertices lie in~$R$ is at least~$\gamma^2\alpha n^{\ell_i}/2$. Since at most~$|V(C_i)\cap R|n^{\ell_i-1}=o(n^{\ell_i})$ such paths contain a vertex from~$C_i$, for sufficiently large~$n$ large enough we can find one tight path disjoint from~$C_i$.

Finally, consider~$C_t$ the final cycle obtained in this process, by connecting~$P_t$ to~$P_1$. Then, as~$C_t$ includes all the tight paths in the almost covering the number of vertices not covered by~$C_t$ is at most
\[|V\smallsetminus V(C_t) | \leq |R| + \gamma^2 n\leq 3 \gamma^2 n.\]

\noindent This finishes the proof, since~$A$ can absorb these vertices into a new path with the same endings.\end{proof}

\section{Preliminary results and basic definitions}
\label{SectionPreliminary}

In this section we collect some preliminary results and introduce the necessary notation.
Given~$\eta, d\in [0,1]$ and a bipartite graph~$G=(V_1\dcup V_2,E)$ we say that~$G$ is \emph{$(\eta, d)$-regular} if for every two sets of vertices~$X\subseteq V_1$ and~$Y\subseteq V_2$ we have
\[
	|e(X, Y)-d|X||Y|| \leq \eta |V_1||V_2|\,.
\]

It is easy to see that every dense graph contains a linear sized bipartite regular subgraph, with almost the same density. That can be proved by a simple application of Szemer\'edi's Regularity Lemma or alternatively by a more direct density increment argument (see~\cite{peng2002holes}). 

\begin{lemma}
\label{regularisation}
For all~$\eta$, $d>0$ there exists some~$\mu>0$ such that for every~$n$-vertex graph~$G$ with~$e(G) \geq  dn^2/2$, there exist disjoint subsets~$V_1$, $V_2\subseteq V(G)$, with~$|V_1|=|V_2|=\lceil\mu n\rceil$ such that the bipartite induced subgraph 
$G[V_1,V_2]$ is~$(\eta, d')$-regular for some $d'\geq d$. \qed
\end{lemma}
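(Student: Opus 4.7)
The plan is a standard regularisation argument, for which I describe the Szemer\'edi-style version. Given $\eta, d > 0$, I apply Szemer\'edi's regularity lemma to $G$ with an auxiliary parameter $\eta_0 \ll \eta$ (to be fixed later) and a large enough lower bound $m_0$ on the number of parts. The output is an integer $t \in [m_0, M(\eta_0, m_0)]$ and an equipartition $V(G) = V_0 \dcup V_1 \dcup \dots \dcup V_t$ with $|V_0| \leq \eta_0 n$, with $|V_1| = \dots = |V_t| = m$, and with at most $\eta_0 \binom{t}{2}$ pairs $(V_i, V_j)$ failing to be $\eta_0$-regular in the usual Szemer\'edi sense. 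The goal is then to isolate one regular pair whose density is already comparable to $d$ and finish via a density-shift trick.

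A standard edge count shows that many pairs have density close to $d$: the edges of $G$ that lie outside regular non-exceptional pairs total at most
\[
|V_0|\,n + t \binom{m}{2} + \eta_0 \binom{t}{2} m^2 \;\leq\; 2\eta_0\,n^2 + \frac{n^2}{2t}\,,
\]
so the regular pairs carry at least $dn^2/2 - 2\eta_0 n^2 - n^2/(2t)$ edges in total, and averaging over the at most $\binom{t}{2}$ regular pairs produces one pair $(V_i, V_j)$ of density $d_0 \geq d - 4\eta_0 - 2/t$. Picking $\eta_0 \leq \eta/32$ and $m_0 \geq 16/\eta$ guarantees $d_0 \geq d - \eta/4$.

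Next I perform a \emph{density shift} to force the density parameter to be at least $d$. For any $X \subseteq V_i$ and $Y \subseteq V_j$, the $\eta_0$-regularity of the pair together with $d_0 \geq d - \eta/4$ gives
\[
\bigl|e(X,Y) - d\,|X||Y|\bigr| \;\leq\; \bigl|e(X,Y) - d_0\,|X||Y|\bigr| + (d-d_0)_+ |X||Y| \;\leq\; \eta_0 m^2 + \tfrac{\eta}{4}\, m^2 \;\leq\; \tfrac{\eta}{2}\, m^2\,,
\]
so $(V_i, V_j)$ is $(\eta/2, d')$-regular in the paper's sense with $d' := \max\{d_0, d\} \geq d$. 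Finally I set $\mu := 1/(2M(\eta_0, m_0))$, which ensures $\lceil \mu n \rceil \leq m$ for large $n$, and pick arbitrary subsets $V_1^* \subseteq V_i$, $V_2^* \subseteq V_j$ of size $\lceil \mu n \rceil$; the hereditary slicing property propagates regularity to $(V_1^*, V_2^*)$, with the polynomial degradation in the regularity parameter absorbed into the choice of $\eta_0$ made at the start.

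The main delicate point is this last sizing step: one must extract subsets of exactly $\lceil \mu n \rceil$ vertices without ruining the regularity parameter. One may either invoke a version of the regularity lemma that produces parts of a prescribed size (so $\mu = 1/t$ matches the part size directly and no slicing is needed), or accept the slicing degradation and take $\eta_0$ correspondingly smaller. Either way the constant $\mu$ depends only on $\eta$ and $d$, which is all that is required.
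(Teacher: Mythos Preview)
Your approach is sound. The paper does not give a proof of this lemma at all: it is stated with a \qed and accompanied only by the remark that it ``can be proved by a simple application of Szemer\'edi's Regularity Lemma or alternatively by a more direct density increment argument (see~\cite{peng2002holes}).'' Your argument is precisely the first of these two suggested routes, carried out in detail, so there is nothing to compare against beyond confirming that what you wrote matches what the authors had in mind.

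Two minor presentational points. First, the displayed inequality in your density-shift step is literally correct only when $d_0 < d$; when $d_0 \geq d$ you should simply take $d' = d_0$ and use the $\eta_0$-regularity directly, rather than route through the triangle inequality with the $(d-d_0)_+$ term. Your conclusion $d' = \max\{d_0, d\}$ is right, but the two cases deserve to be separated. Second, the lemma as stated does not restrict to large $n$, so strictly speaking you should note that for bounded $n$ the hypothesis $e(G) \geq d n^2/2$ together with a sufficiently small $\mu$ makes the claim trivial (e.g.\ $\lceil \mu n \rceil = 1$). Neither of these affects the substance of your argument.
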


For a hypergraph~$H=(V,E)$ recall its \emph{shadow $\partial H$} is the subset of $V^{(2)}$
of those pairs that are contained in some edge of $H$.
For disjoint sets of vertices~$V_1$, $V_2\subseteq V$ with a slight abuse of notation 
we write~$\partial H[V_1, V_2]$ 
for the set of ordered pairs in $V_1\times V_2$ that correspond to unordered pairs in the shadow, i.e., 
\[
	\partial H[V_1, V_2]
	=
	\big\{
		(v_1,v_2)\in V_1\times V_2\colon \{v_1,v_2\}\in \partial H
	\big\}\,.
\]
Given~$\rho,d>0$, a set of ordered pairs of vertices~$P \in V^{2}$, and a subset~$X\subseteq V$ we say that~$H$ 
is~\emph{$(\rho, d, \ev)$-dense over~$(X, P)$} if for every subset of vertices~$X'\subseteq X$ and every subset of pairs~$P'\subseteq P$ we have
\[
	e(X',P')\geq d\,|X'||P'| - \rho\,|X||P|\,,
\]
which is a version of~$\ev$-density restricted to~$P$ and~$X$.
For the next lemma we also need the following concept of restricted vertex neighbourhood. 
Given a vertex~$v\in V$ and a set of ordered pairs~$P\in V^2$ 
we define its \emph{neighbourhood restricted to $P$} by
\[
	N(v,P)= \{(x,y)\in P \colon vxy\in E\}\,.
\]

\begin{lemma}\label{LemmaRegularDegree}
Let~$H=(V,E)$ be a hypergraph, 
$X\subseteq V$ be a set of vertices, and
$P\subseteq V^{2}$. If~$H$ is~$(\rho, d, \ev)$-dense over~$(X,P)$ for some constants $\rho$, $d>0$, then
\[
\big|\big\{x\in X \colon |N(x,P)| <(d-\sqrt\rho)|P|\big\}\big| < \sqrt\rho\,|X|\,.
\]
\end{lemma}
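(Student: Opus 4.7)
The plan is to proceed by contradiction via a direct averaging against the density hypothesis. Denote the set of ``bad'' vertices by
\[
	X' \;=\; \big\{x\in X\colon |N(x,P)|<(d-\sqrt\rho)|P|\big\}\,,
\]
and suppose for contradiction that $|X'|\geq \sqrt\rho\,|X|$. The first step is to count $e(X',P)$ in two ways.

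For the lower bound, I would apply the hypothesis that $H$ is $(\rho,d,\ev)$-dense over $(X,P)$ to the subset $X'\subseteq X$ (taking $P'=P$), which yields
\[
	e(X',P) \;\geq\; d\,|X'||P| - \rho\,|X||P|\,.
\]
For the upper bound, since $e(X',P)=\sum_{x\in X'}|N(x,P)|$ and every $x\in X'$ satisfies $|N(x,P)|<(d-\sqrt\rho)|P|$ by definition, we have
\[
	e(X',P) \;<\; (d-\sqrt\rho)\,|X'||P|\,.
\]

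Combining the two bounds and cancelling the common term $d\,|X'||P|$ gives $\sqrt\rho\,|X'||P|<\rho\,|X||P|$, that is $|X'|<\sqrt\rho\,|X|$, contradicting the assumption. There is no real obstacle here; the only point requiring a little care is making sure the density hypothesis is invoked in exactly the form stated (over $(X,P)$ with error term $\rho|X||P|$, not $\rho n^3$), so that the error term is correctly balanced by the $\sqrt\rho$-deficit in each bad vertex's degree. This is the standard Chebyshev-type ``quasirandomness implies typical degree'' deduction.
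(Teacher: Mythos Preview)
Your argument is correct and essentially identical to the paper's own proof: both define the bad set $X'$, bound $e(X',P)$ from below by the $(\rho,d,\ev)$-density hypothesis and from above by the defining degree deficit, and compare. The only cosmetic difference is that you phrase it as a contradiction, whereas the paper simply reads off $|X'|<\sqrt\rho\,|X|$ directly from the two inequalities; your contradiction hypothesis is never actually used.
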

\begin{proof}
Let~$X'\subseteq X$ be the vertices with less than~$(d-\sqrt{\rho} )|P|$ neighbour pairs in~$P$. The definition of~$X'$ and the~$(\rho,d,\ev)$-density of~$H$ over~$(X,P)$
provide the following upper and lower bounds on~$e(X',P)$
\[d|X'||P|-\rho |X||P| \leq e(X',P) \leq (d-\sqrt{\rho} )|P|\cdot |X'|\]
and the desired bound on $|X'|$ follows.
\end{proof}

The following result asserts that hypergraph contains 
subhypergraph with almost the same density and such that every pair of vertices with positive codegree has at least $\Omega(|V|)$ neighbours.
This fact can be proved by removing iteratively the edges which contain a pair with small codegree and 
we omit the details.

\begin{lemma}\label{cleaning}
For every $\beta>0$ and every $n$-vertex hypergraph $H$ there is a 
hypergraph~$H_{\beta}\subseteq H$ on the same vertex set 
with $e(H_\beta)\geq e(H)-\beta n^3$ such that for every pair of vertices~$x$, $y$ either~$d_{H_{\beta}}(x,y)=0$ or $d_{H_{\beta}}(x,y)\geq \beta n$.
In particular, if we have~$d_{H_{\beta}}(x,y)>0$, then $(x,y)$ is $\beta$-connectable in~$H$.\qed
\end{lemma}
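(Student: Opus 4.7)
The plan is to construct $H_\beta$ by a standard iterative cleaning procedure. Starting from $H^{(0)} := H$, at each stage I look for a pair $\{x,y\}$ of vertices with $0 < d_{H^{(i)}}(x,y) < \beta n$ and delete from $H^{(i)}$ every edge containing $\{x,y\}$; if no such pair exists, the procedure stops and I set $H_\beta := H^{(i)}$. The process terminates because the number of edges strictly decreases at each step, and by construction the resulting hypergraph satisfies the desired dichotomy that every pair of vertices has codegree either $0$ or at least $\beta n$.

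To control the edge loss, I would observe that once a pair $\{x,y\}$ has been cleaned its codegree in the current hypergraph is $0$, and since further deletions can only decrease codegrees, the same pair is never cleaned a second time. Hence at most $\binom{n}{2}$ cleaning steps occur in total, and each removes strictly fewer than $\beta n$ edges, so
\[
	e(H) - e(H_\beta) < \binom{n}{2}\cdot \beta n \leq \frac{\beta n^3}{2} \leq \beta n^3\,.
\]

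For the ``in particular'' part, suppose $d_{H_\beta}(x,y) > 0$. Then the dichotomy forces $d_{H_\beta}(x,y) \geq \beta n$, so there exist at least $\beta n$ vertices $z$ with $xyz \in E(H_\beta) \subseteq E(H)$. For each such $z$ the edge $xyz$ witnesses $d_{H_\beta}(y,z) > 0$, which in turn forces $d_{H_\beta}(y,z) \geq \beta n$, and therefore $d_H(y,z) \geq \beta n$. Consequently every such $z$ lies in the set $Z_{xy}$ from Definition~\ref{ConnectablePair}, which yields $|Z_{xy}| \geq \beta n$ and shows that $(x,y)$ is $\beta$-connectable in $H$. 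The whole argument is routine iterative cleaning and I do not anticipate any genuine obstacle; the only point to be a bit careful about is that the connectability check must appeal to the codegrees in $H$ rather than $H_\beta$, which is immediate from $H_\beta \subseteq H$.
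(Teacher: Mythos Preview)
Your proposal is correct and follows exactly the approach the paper indicates: the paper merely states that the lemma ``can be proved by removing iteratively the edges which contain a pair with small codegree'' and omits the details, which is precisely the iterative cleaning procedure you carry out. Your edge-count bound and the verification of $\beta$-connectability via the dichotomy in $H_\beta$ are both correct and fill in the details the paper leaves implicit.
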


Let~$F$ and~$F'$ be two hypergraphs.
We say that~$F$ \textit{contains a homomorphic copy of~$F'$} if there is a function~$\phi$ from~$V(F')$ to~$V(F)$ such that for every edge~$xyz\in E(F')$ we have that~$\phi(x)\phi(y)\phi(z)\in E(F)$. 
We denote this fact as~$F'\xrightarrow{hom} F$ and we recall the following well known consequence from
Erd\H os~\cite{erdos1964extremal}.

\begin{lemma}\label{blowup}
For every~$\xi>0$ and~$k$, $\ell \in \mathbb N$ there is~$\zeta>0$ and~$n_0\in \mathbb N$ such that the following holds. Let~$F$ and~$F'$ be hypergraphs such that~$|V(F)|=k$ and~$|V(F')|=\ell$ and~$F'\xrightarrow{hom} F$. If a hypergraph~$H$ on~$n>n_0$ vertices contains at least~$\xi n^k$ copies of~$F$, then~$H$ contains~$\zeta n^\ell$ copies of~$F'$.\qed
\end{lemma}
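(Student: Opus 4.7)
My plan is to deduce the lemma in two moves: first count homomorphic copies of the $\ell$-fold blow-up $F[\ell]$ of $F$ in $H$ by a standard supersaturation argument, and then extract one copy of $F'$ from every such blow-up. Here $F[\ell]$ denotes the hypergraph obtained from $F$ by replacing each vertex $u\in V(F)$ by a cluster $U_u$ of size $\ell$ and each edge $uvw\in E(F)$ by the complete $3$-partite hypergraph between $U_u$, $U_v$, $U_w$. A preliminary observation is that $F'$ embeds injectively into $F[\ell]$: fix a homomorphism $\phi\colon V(F')\to V(F)$ witnessing $F'\xrightarrow{\mathrm{hom}} F$, enumerate $V(F')=\{w_1,\dots,w_\ell\}$, and send $w_i$ to the $i$-th vertex of $U_{\phi(w_i)}$. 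Distinct $w_i$ land in distinct positions, and any edge $w_iw_jw_k\in E(F')$ is mapped to an edge of $F[\ell]$ since $\phi(w_i)$, $\phi(w_j)$, $\phi(w_k)$ are three distinct vertices of $F$ forming an edge.

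\textbf{Counting copies of $F[\ell]$ in $H$.} The heart of the proof is to show that $H$ admits at least $\xi^{\ell^k}n^{k\ell}$ homomorphisms $F[\ell]\to H$. For $0\le j\le k$ let $F_j$ be obtained from $F$ by blowing up the first $j$ vertices into clusters of size $\ell$, so $F_0=F$ and $F_k=F[\ell]$. To pass from $F_{j-1}$ to $F_j$ I fix the images in $V(H)$ of every vertex of $F_{j-1}$ except the vertex~$u_j$ to be blown up; writing $N$ for the number of valid choices for the image of $u_j$ given those fixed values, the $\ell$-cluster replacing $u_j$ in $F_j$ is independent (there are no edges within the cluster) and each of its members faces the same constraints, so the number of valid completions of $F_j$ factors as $N^\ell$. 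Summing over the fixed coordinates and applying Jensen's inequality to the map $x\mapsto x^\ell$ yields
\[
    \frac{\hom(F_j,H)}{n^{|V(F_j)|}}\ \geq\ \left(\frac{\hom(F_{j-1},H)}{n^{|V(F_{j-1})|}}\right)^{\ell}.
\]
Starting from homomorphism density at least $\xi$ at $j=0$ and iterating $k$ times gives density at least $\xi^{\ell^k}$ at $j=k$.

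\textbf{From $F[\ell]$-copies to $F'$-copies.} Only $O(n^{k\ell-1})$ of the homomorphisms $F[\ell]\to H$ fail to be injective, so for $n\ge n_0$ large enough $H$ contains at least $\tfrac12\xi^{\ell^k}n^{k\ell}$ labelled injective copies of $F[\ell]$. Composing each with the embedding $F'\hookrightarrow F[\ell]$ from the preliminary observation produces a labelled copy of $F'$ in $H$; a given labelled copy of $F'$ is obtained in this way by at most $n^{k\ell-\ell}$ copies of $F[\ell]$, since the $\ell$ vertices in the image of $F'$ are determined while the remaining $k\ell-\ell$ vertices of $F[\ell]$ can be mapped freely. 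Dividing, $H$ contains at least $\tfrac12\xi^{\ell^k}n^{\ell}$ copies of $F'$, so $\zeta:=\tfrac12\xi^{\ell^k}/\ell!$ suffices after converting from labelled to unlabelled copies. The main technical point is the inductive step — tracking the exponents so that the density-style bound compounds cleanly — but this is the classical Erd\H{o}s supersaturation argument and introduces no genuinely new idea.
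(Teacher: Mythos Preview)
Your argument is correct and is precisely the classical supersaturation argument the paper is invoking; note that the paper does not actually supply a proof of this lemma but merely records it as a well-known consequence of Erd\H{o}s~\cite{erdos1964extremal} (hence the \qed\ after the statement). Your write-up could therefore serve as the omitted proof; the only cosmetic point is that the hypothesis ``$\xi n^k$ copies of $F$'' yields homomorphism density at least $\xi$ only after identifying copies with injective homomorphisms, but this costs at most a factor of $|\mathrm{Aut}(F)|$ and is harmless for the conclusion.
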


We  denote the hypergraph with four vertices and three edges by~$K_4^{(3)-}$. We refer to the vertex of degree three as the \emph{apex}. Glebov, Kr\'a{\soft{l}}, and Volec \cite{K4minus} showed that~$\vvv$-density bigger than~$1/4$ yields the existence 
of a, in fact of many copies of,~$K_4^{(3)-}$.

\begin{theorem}[Glebov, Kr\'a{\soft{l}} \& Volec, 2016]\label{K4-}
For every~$\epsilon>0$ there exist~$\rho$ and $\xi>0$ such that every sufficiently 
large~$(\rho, 1/4+\epsilon, \vvv)$-dense $n$-vertex hypergraph 
contains~$\xi n^4$ copies of~$K_4^{(3)-}$. \qed
\end{theorem}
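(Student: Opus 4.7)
The plan is to combine a qualitative Tur\'an-type statement --- every sufficiently large $(\rho,1/4+\epsilon,\vvv)$-dense hypergraph contains \emph{at least one} copy of $K_4^{(3)-}$ --- with the hypergraph removal lemma to obtain the counting version. Sharpness of the threshold $1/4$ is witnessed by the cyclic-triangle construction: orient each edge of $K_n$ independently and uniformly at random, and take as hyperedges those triples whose three arcs form a directed $3$-cycle. A short case analysis on tournaments of order four shows that any four vertices span at most two cyclic triangles, so the construction is $K_4^{(3)-}$-free; concentration yields $\vvv$-density $1/4-o(1)$ with high probability. Hence $1/4+\epsilon$ is the correct threshold to aim for.

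\textbf{Qualitative step.} For the matching upper bound, I would follow Razborov's flag algebra method as implemented in \cite{K4minus}: encode densities of flags of small fixed order in a semidefinite program whose linear constraints reflect the $\vvv$-density hypothesis, and exhibit a positive-semidefinite combination certifying that the $K_4^{(3)-}$-free $\vvv$-density cannot exceed $1/4$. The certificate is produced numerically by an SDP solver and then verified symbolically.

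\textbf{From qualitative to counting.} Suppose for contradiction that $H$ is $(\rho,1/4+\epsilon,\vvv)$-dense but contains fewer than $\xi n^4$ copies of $K_4^{(3)-}$, for some $\xi=\xi(\rho,\epsilon)$ to be chosen small. The hypergraph removal lemma (Gowers; Nagle--R\"odl--Schacht--Skokan) then yields $H'\subseteq H$ on the same vertex set with $e(H')\geq e(H)-\rho n^3$ and $K_4^{(3)-}$-free. Removing only $\rho n^3$ edges shifts the $\vvv$-density error by at most $O(\rho)$, so $H'$ remains $(\rho',1/4+\epsilon,\vvv)$-dense for a suitable $\rho'$, contradicting the qualitative step applied to $H'$. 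An alternative route, which I view as less clean but more self-contained, would be to extract a single $K_4^{(3)-}$ from the flag-algebra bound and then bootstrap to $\xi n^4$ copies by a tensorised sampling plus counting argument in the spirit of Lemma~\ref{blowup}, at the cost of having to re-verify the $\vvv$-density passes to random induced subhypergraphs.

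\textbf{Main obstacle.} The entire difficulty sits in the qualitative step: the matching upper bound on the $\vvv$-density of $K_4^{(3)-}$-free hypergraphs appears to genuinely require a flag-algebra (or comparable SDP-based) computation, since the stability picture near the cyclic-triangle construction is more intricate than in classical Tur\'an problems and no purely combinatorial argument is currently known. This is precisely why the statement is quoted from \cite{K4minus} rather than reproved here; my sketch isolates the hard input and delegates it to that computation, the remaining supersaturation step being standard once existence is granted.
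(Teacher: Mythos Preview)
The paper gives no proof of this statement; the trailing \qed indicates it is imported wholesale from Glebov, Kr\'al', and Volec~\cite{K4minus}, so there is nothing in the paper to compare your proposal against.

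That said, your outline is a correct account of how the result is established. The random-tournament construction together with the observation that a four-vertex tournament contains at most two cyclic triangles gives the lower bound $1/4$. The flag-algebra certificate of~\cite{K4minus} is indeed the only known route to the matching upper bound, and you are right that this is where the genuine difficulty lies. Your removal-lemma bootstrap from existence to $\xi n^4$ copies is also valid: deleting at most $\delta n^3$ edges lowers each $e(X,Y,Z)$ by at most $6\delta n^3$, so a $(\rho,1/4+\epsilon,\vvv)$-dense hypergraph remains $(\rho+6\delta,1/4+\epsilon,\vvv)$-dense after the deletion, and one simply chooses $\delta$ so that $\rho+6\delta$ still triggers the qualitative statement. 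You have correctly isolated the hard input and explained why the paper delegates it rather than reproving it.
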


\section{Almost covering}
\label{SectionAC}

In this section we present a very straightforward proof of Lemma \ref{AC}. 

\begin{proof}[Proof of Lemma~\ref{AC}]
	Given~$d, \gamma>0$ take~$\beta$ and~$\rho$ such that
	$$\beta=\rho=\frac{d\gamma^6}{13}.$$ 
	We show that a maximal collection of~$\beta$-connectable tight paths, each of which having at least~$\beta n$ vertices, must cover all but at most~$\gamma^2 n$ vertices. We do that by showing that in every set~$X\subseteq V(H)$ with at least~$\gamma^2 n$ vertices there exists a~$\beta$-connectable tight path of size~$\beta n$. Indeed, the~$(\rho, d, \vvv)$-density implies that in such a set~$X$, we have
\[e(X) \geq \dfrac{d|X|^3}{6} - \rho n^3,\]

\noindent where we discounted the ordering of triples. In~$H[X]$ we remove, iteratively, every edge that contains an (unordered) pair of vertices with codegree smaller than~$\beta n$. In this way, we remove at most~$\beta n^3$ edges and get a hypergraph with at least
\begin{align*}
    e(X)-\beta n^3  &\geq \frac{d|X|^3}{6} - \rho n^3 - \beta n^3 \\
                    &\geq \left(\frac{d\gamma^6}{6}-\rho - \beta\right)n^3, 
\end{align*}
edges. Owing to the choice of~$\beta$ and~$\gamma$ this hypergraph is not empty. 
Now a tight path with~$\beta n$ vertices can be found in a greedy manner. Moreover, if~$(x,y)$ is a pair contained in such path, then we have that the set
\[Z_{xy}=\{z\in V\colon xyz\in E \text{ and }d(y,z)\geq \beta n\}\] 
has at least~$\beta n$ vertices.
\end{proof}

\section{Connecting Lemma}
\label{SectionConnecting}

We dedicate this section to prove the Connecting Lemma (Lemma~\ref{Connecting_Lemma}). The proof splits into several lemmata. The Connecting Lemma asserts that every ordered connectable pair can be connected to any other ordered connectable pair. In a first step in Lemmata~\ref{025}
and~\ref{cherries} we show that there are many connections between large sets of
unordered pairs (without specifying the order of the ending pairs). 
In fact, these connection can be achieved by paths consisting of only two edges, which we refer to as \emph{cherries} (see Definition~\ref{def:cherry} below). On the price of extending the length by at most two, in Lemma~\ref{connection} we establish that one can even fix the
order of one of the sets of given pairs. On the other hand, this is complemented by Lemma~\ref{lem:turns} showing that there are many pairs of unordered pairs that 
can be connected in any orientation. We call such pairs of pairs
\emph{turnable} (see Definition~\ref{def:turnable} below).

For the proof of the Connecting Lemma we can now start with any given connectable pair~$(x,y)$ 
and move to its second neighbourhood, which is a large
set of ordered pairs. From that set we shall reach many turnable pairs. Similarly,
from any given ending pair $(z,w)$ we also reach many turnable pairs.
These paths give the turnable pairs an orientation, but since the turnable pairs can
be connected in any orientation, we find the desired tight $(x,y)$-$(z,w)$-paths. 
The detailed presentation of this argument renders the proof of the Connecting Lemma, which we defer to the end of this section.

\begin{lemma}\label{025}
For all~$\xi$, $\epsilon \in (0,1]$ 
there exist~$\eta$, $\rho >0$ 
such that the following holds for sufficiently large~$m$. 

Suppose $V_1$, $V_2$, $V_3$ are pairwise disjoint sets of size $m$
and  suppose $G=(V_1 \dcup V_2,P)$ is an $(\eta,\xi)$-regular bipartite graph.
If~$H=(V_1\dcup V_2\dcup V_3,E)$ is a $3$-partite hypergraph 
that is~$(\rho, 1/4+\epsilon, \ev)$-dense over~$(V_3,P)$, then 
\[ 
\big|\partial H[V_1,V_3]\big|+\big|\partial H[V_2,V_3]\big| \geq \left( 1+ \eps \right)m^2\,. 
\] 
\end{lemma}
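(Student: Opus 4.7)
The plan is to reduce the claim to a per-vertex inequality over $v_3\in V_3$. For each such $v_3$ set
\[
A(v_3)=\{v_1\in V_1:\{v_1,v_3\}\in\partial H\}
\qand
B(v_3)=\{v_2\in V_2:\{v_2,v_3\}\in\partial H\},
\]
so that, since $H$ is $3$-partite, $|\partial H[V_1,V_3]|+|\partial H[V_2,V_3]|=\sum_{v_3\in V_3}(|A(v_3)|+|B(v_3)|)$. It will then suffice to show that $|A(v_3)|+|B(v_3)|\geq(1+\eps)m$ for almost every $v_3$.

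First I would apply Lemma~\ref{LemmaRegularDegree} with $X=V_3$ and $d=1/4+\eps$ to see that all but at most $\sqrt{\rho}\,m$ vertices $v_3\in V_3$ --- call them \emph{typical} --- satisfy $|N(v_3,P)|\geq(1/4+\eps-\sqrt{\rho})|P|$. Applying the $(\eta,\xi)$-regularity of $G$ to the pair $(V_1,V_2)$ gives $|P|\geq(\xi-\eta)m^2$. The key observation is that any hyperedge $v_1v_2v_3\in E$ with $v_i\in V_i$ forces $v_1\in A(v_3)$ and $v_2\in B(v_3)$, so that $N(v_3,P)\subseteq P\cap(A(v_3)\times B(v_3))$; applying regularity once more, this time to $(A(v_3),B(v_3))$, yields $|N(v_3,P)|\leq\xi\,|A(v_3)||B(v_3)|+\eta m^2$.

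Combining these upper and lower bounds on $|N(v_3,P)|$, for typical $v_3$ and $\rho,\eta$ chosen sufficiently small compared to $\eps$ and $\xi$, I obtain $|A(v_3)||B(v_3)|\geq(1/4+\eps')m^2$ with $\eps'$ as close to $\eps$ as desired. The AM--GM inequality then gives
\[
|A(v_3)|+|B(v_3)|\geq 2\sqrt{|A(v_3)||B(v_3)|}\geq\sqrt{1+4\eps'}\,m\geq(1+\eps)m,
\]
where the last step uses the elementary estimate $\sqrt{1+4t}\geq 1+t$ for $t\in[0,2]$ (equivalent, after squaring, to $2t\geq t^2$) with a small slack to absorb $\eps-\eps'$. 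Summing over the at least $(1-\sqrt{\rho})m$ typical $v_3$'s then gives $|\partial H[V_1,V_3]|+|\partial H[V_2,V_3]|\geq (1+\eps)m^2$. The whole argument is essentially a constant chase; the main thing to get right is the calibration of $\rho$ and $\eta$ so that $\eps'$ lies close enough to $\eps$ for AM--GM to push the sum past $1$, and this is also the point at which the $1/4$ in the density hypothesis reveals itself as sharp: $\sqrt{1+4t}\geq 1+t$ identifies $1/4$ as precisely the density threshold at which $|A(v_3)|+|B(v_3)|$ can be forced to exceed $m$ on average.
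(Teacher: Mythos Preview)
Your approach is essentially identical to the paper's: per-vertex decomposition, Lemma~\ref{LemmaRegularDegree} to isolate typical vertices in $V_3$, regularity to bound $|N(v_3,P)|$ from above by $\xi|A(v_3)||B(v_3)|+\eta m^2$, the density hypothesis to bound it from below, then AM--GM and a summation. The only slip is in the final step: summing $(1+\eps)m$ over merely $(1-\sqrt{\rho})m$ typical vertices gives $(1+\eps)(1-\sqrt{\rho})m^2<(1+\eps)m^2$, so you must secure a slightly larger per-vertex bound (the strict slack in $\sqrt{1+4t}>1+t$ that you already noted allows this, and the paper does it explicitly by obtaining $(1+\tfrac{11}{10}\eps)m$ per vertex) before summing.
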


\begin{proof}
Given $\xi$ and $\eps$ we set
\[
	\rho=\left(\frac{\eps}{21}\right)^2
	\qqand	
	\eta\leq\frac{\xi\eps}{36}\,.
\]
Let $G=(V_1 \dcup V_2,P)$ and $H=(V_1\dcup V_2\dcup V_3,E)$ be given. Since $G$ is bipartite we may view~$P$ as a subset of $V_1\times V_2$ and, hence, as a set of ordered pairs.
Lemma~\ref{LemmaRegularDegree} applied to~$V_3$ and $P$ ensures for the hypergraph $H$
that there are at 
most~$\sqrt{\rho}m$ vertices in $V_3$ with less than~$(1/4+\epsilon-\sqrt{\rho})|P|$ neighbour 
pairs in~$P$. We remove such vertices from~$V_3$ and let~$V'_3$ be the  resulting subset of~$V_3$. 

Consider a fixed vertex~$v_3 \in V'_3$. By the definition of~$V'_3$, we have
\begin{equation}\label{lower}
		|N(v_3,P)|
		\geq 
		\left( \dfrac{1}{4}+\epsilon- \sqrt{\rho}\right)|P|
		\geq 
		\left(\dfrac{1}{4}+\dfrac{15}{16}\epsilon\right)|P|\,.
\end{equation}
For~$i=1$, $2$ we consider the neighbourhood of~$v_3$ in~$\partial H[V_i, V_3]$ defined by
\[
	N_i(v_3)=\big\{v_i\in V_i\colon (v_i,v_3)\in \partial H[V_i, V_3]\big\}
\]
and note that
\[ 
	|N(v_3,P)|
    \leq 
	e_G\big(N_1(v_3),N_2(v_3)\big)\,.
\]
Consequently, the $(\eta, \xi)$-regularity of $G$ yields  
    \begin{align}\label{upper}
        |N(v_3,P)|
        \leq 
        \xi|N_1(v_3)||N_2(v_3)| + \eta m^2\,.
    \end{align}
Combining~\eqref{lower} and~\eqref{upper} with the
upper bound on $|P|$ provided by the regularity of $G$ we obtain
\[
     	4\xi|N_1(v_3)||N_2(v_3)| 
     	\geq 
     	\Big(1 +\frac{15}{4}\epsilon\Big)|P| - 4\eta m^2
		\geq
     	\Big(1 +\frac{15}{4}\epsilon\Big)(\xi-\eta)m^2 - 4\eta m^2
     	\geq  
		\Big(1 +\frac{7}{2}\epsilon\Big)\xi m^2\,,
\]
where the last inequality makes use of the choice of $\eta$. 
Hence, the AM-GM inequality tells us
\[
	\big(|N_1(v_3)| + |N_2(v_3)|\big)^2 
	\geq 
	4\,\big|N_1(v_3)\big|\big|N_2(v_3)\big| 
	\geq \Big(1+\frac{7}{2}\epsilon\Big)m^2
\]
and, consequently, we arrive at
\[
	|N_1(v_3)| + |N_2(v_3)|
	\geq
	\Big(1+\frac{7}{2}\epsilon\Big)^{1/2}m 
	\geq 
	\Big(1+\frac{11}{10}\epsilon\Big)m\,.
\]
Finally, summing for all vertices~$v_3\in V_3'$ we obtain the desired lower bound 
\begin{align*}
	\big|\partial H[V_1,V_3]\big|+\big|\partial H[V_2,V_3]\big| 
	&\geq
	\sum_{v_3\in V_3'}\big(|N_1(v_3)| + |N_2(v_3)|\big)\\
	&\geq 
    \Big(1 + \dfrac{11}{10}\eps\Big)m\cdot |V'_3|\\
	& \geq 
	\Big(1 + \dfrac{11}{10}\eps\Big)\big(1-\sqrt{\rho}\big)m^2\\
	& \geq 
	( 1+ \eps)m^2\,,
\end{align*}
where we used the choice of $\rho$ for last inequality.
\end{proof}

Tight paths of length two will play a special r\^ole in our proof and the following notation will be useful.
\begin{dfn}\label{def:cherry}
Given a hypergraph~$H=(V,E)$ and disjoint sets~$p$, $q\in V^{(2)}$, we say that the edges~$xyz$, $yzw\in E$ form a \emph{$(p,q)$-cherry}, if $p=\{x,y\}$
and $q=\{z,w\}$.

Moreover, given two sets~$P$, $Q\subseteq V^{(2)}$, 
we say that edges $e$, $e'\in E$ form a \emph{$(P, Q)$-cherry}, if they form a $(p,q)$-cherry for some 
disjoint sets $p\in P$ and~$q\in Q$. 
\end{dfn}

The next lemma asserts that in $\ev$-dense hypergraphs with density larger than $1/4$
large sets of pairs induce many cherries.

\begin{lemma}\label{cherries}
For every~$\xi$, $\epsilon \in (0,1]$ 
there exist~$\rho$, $\nu >0$ such that for every sufficiently 
large~$(\rho, 1/4+\varepsilon, \ev)$-dense hypergraph~$H=(V,E)$ the following holds. 
For all sets~$P$, $Q\subseteq V^{(2)}$ 
of size at least~$3\xi n^2$ there are at least~$\nu n^4$ $(P,Q)$-cherries.
\end{lemma}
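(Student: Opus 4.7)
The plan is to localise $P$ and $Q$ to disjoint vertex subsets where they induce regular bipartite graphs, apply Lemma~\ref{025} twice to inherit density to the link structures, and count cherries by double counting combined with a further use of the $\ev$-density of $H$.

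First apply Lemma~\ref{regularisation} to the graph $P$ (with $|P|\geq 3\xi n^2$) to extract disjoint $V_1, V_2\subseteq V$ of size $m = \lceil\mu n\rceil$ such that $P^* := P[V_1, V_2]$ is $(\eta, d_P)$-regular for some $d_P\geq 6\xi$; then apply it again to $Q$ restricted to $V\smallsetminus(V_1\cup V_2)$ to extract disjoint $V_3, V_4$ of size $m$ with $Q^* := Q[V_3, V_4]$ being $(\eta, d_Q)$-regular for some $d_Q\geq 6\xi$. For $\mu$ chosen small enough in terms of $\xi$, the $\ev$-density of $H$ is inherited to the restricted pair sets $(V_3, P^*)$ and $(V_2, Q^*)$ with parameter $O(\rho)$. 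Applying Lemma~\ref{025} to $(V_1, V_2, V_3)$ with $P^*$ and symmetrically to $(V_3, V_4, V_2)$ with $Q^*$, or equivalently reading out the quantitative content via Lemma~\ref{LemmaRegularDegree}, yields: all but an $O(\sqrt\rho)$-fraction of $z\in V_3$ satisfy $|L_z^P|\geq (1/4+\eps/2)|P^*|$ with $L_z^P = \{(x,y)\in P^*: xyz\in E\}$; symmetrically for $y\in V_2$ and $L_y^Q = \{(z,w)\in Q^*: yzw\in E\}$.

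Writing $a(y, z) := |\{x: (x,y)\in P^*,\, xyz\in E\}|$ and $b(y, z) := |\{w: (z,w)\in Q^*,\, yzw\in E\}|$, the number of $(P, Q)$-cherries inside $V_1\cup V_2\cup V_3\cup V_4$ is at least $\sum_{(y,z)\in V_2\times V_3} a(y,z)b(y,z)$. The density inheritance above gives $\sum_{y,z} a(y,z)\gtrsim d_P m^3$ and $\sum_{y,z} b(y,z)\gtrsim d_Q m^3$. To convert these into a sum-of-products bound, I define the $P$-rich pair set $\Pi := \{(y,z): a(y,z)\geq \tau m\}$ with $\tau = \Omega(\xi)$ so that $|\Pi| = \Omega(m^2)$ by Markov's inequality; apply the $\ev$-density of $H$ with $X = V_4$ and $\Pi$ to obtain $\Omega(m^3)$ triples $(y, z, w)$ with $(y, z)\in \Pi$ and $yzw\in E$; and finally intersect with $(z, w)\in Q^*$ via an averaging using the regularity of $Q^*$ (which yields $|N_{Q^*}(z)|\approx d_Q m$ for typical $z$). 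The upshot is $\Omega(m^2)$ pairs $(y, z)$ with $a(y,z), b(y,z) = \Omega(m)$, yielding $\nu n^4$ cherries.

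The main obstacle is the last correlation step: showing that a positive fraction of $(y, z)\in\Pi$ also satisfy $b(y, z) = \Omega(m)$. A naive union bound on the densities of the $P$-rich and $Q$-rich pair sets is insufficient because each density is only $\Omega(\xi)$, which may be arbitrarily small, and nothing prevents these sets from being essentially disjoint a priori. The crucial feature that saves us is the $\ev$-density with parameter strictly greater than $1/4$ (cf.\ Example~\ref{ex:lbb}): it couples the $P$- and $Q$-sides through edges of $H$ and precludes the anti-correlation scenarios that would otherwise drive the product $ab$ to zero. Careful parameter choice ($\rho\ll\mu\ll\xi,\eps$) will be needed to keep the error terms in all applications of $\ev$-density under control.
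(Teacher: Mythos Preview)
Your outline has a genuine gap at exactly the step you flag as ``the main obstacle'': the correlation argument does not go through. Applying $\ev$-density with vertex set $V_4$ and pair set $\Pi\subseteq V_2\times V_3$ yields $\Omega(m^3)$ triples $(y,z,w)$ with $(y,z)\in\Pi$, $w\in V_4$ and $yzw\in E$, but it gives you no control over whether $(z,w)\in Q^*$. The ``averaging using regularity of $Q^*$'' only tells you that typical $z$ have $|N_{Q^*}(z)|\approx d_Q m$; it does not force the edges $yzw$ to prefer $w\in N_{Q^*}(z)$. Concretely, since $d_Q$ may be as small as $6\xi$, the configuration in which $\{w\in V_4:yzw\in E\}=V_4\smallsetminus N_{Q^*}(z)$ for every $(y,z)\in\Pi$ satisfies $e(V_4,\Pi)\geq(1-d_Q)\,m\,|\Pi|\geq(1/4+\eps)\,m\,|\Pi|$, hence is fully consistent with your use of $\ev$-density, while producing not a single cherry. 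More structurally, your two applications of Lemma~\ref{025} bound the $P$-shadow and the $Q$-shadow on the single product $V_2\times V_3$ by roughly $m^2/4$ each; two subsets of that size in a universe of size $m^2$ need not intersect, and nothing in your argument exploits the extra $\eps$ in $1/4+\eps$ to force an overlap.

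The paper's remedy is a pigeonhole over four ``middle'' bipartite classes rather than one. After regularising $P$ on $A_1\dcup A_2$ and $Q$ on $B_1\dcup B_2$, and passing through Lemma~\ref{cleaning} so that positive codegree becomes $\Omega(m)$ codegree, one applies Lemma~\ref{025} \emph{four} times: to $P[A_1,A_2]$ with third class $B_1$ and with $B_2$, and to $Q[B_1,B_2]$ with third class $A_1$ and with $A_2$. Summing, the total $P$-shadow plus $Q$-shadow over the four products $A_i\times B_j$ is at least $4(1+\eps)m^2$. By averaging, some single product $A_{i_0}\times B_{j_0}$ carries combined shadow at least $(1+\eps)m^2>|A_{i_0}||B_{j_0}|$, and inclusion--exclusion forces an intersection of size at least $\eps m^2$ there. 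This is precisely where the ``$+\eps$'' is spent. In your setup you have fixed the middle product to be $V_2\times V_3$ in advance; the missing idea is to let the third class range over both sides of each regular pair and then pigeonhole.
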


\begin{proof}
Given~$\xi$ and $\epsilon$ we apply Lemma~\ref{025} and 
we obtain~$\eta$ and~$\rho'$. Without loss of generality we may assume that $\eta\leq \xi/2$.
Moreover, Lemma~\ref{regularisation} applied with~$\eta$ and $d=\xi$
yields some~$\mu>0$ and we fix the desired constants $\rho$ and $\nu$
by
\[
	\rho=\frac{\mu^3\xi}{56}\rho'
	\qqand
	\nu=9\rho^2\mu^4\eps\,.
\]
Let~$H=(V,E)$ and~$P$, $Q\subseteq V^{(2)}$  satisfy the assumptions of the lemma.

We consider a random balanced bipartition of~$A\dcup B$ of~$V$ and let $P_A=\{p\in P\colon p\subseteq A\}$
and $Q_B=\{q\in Q\colon q\subseteq B\}$. A standard application of 
Chebyshev's inequality shows that there exists a balanced partition of $V$ such that $|P_A|$, 
$|Q_B|\geq \xi n^2/2$.
We apply Lemma~\ref{regularisation} separately to the graphs~$(A,P_A)$ and~$(B,Q_B)$ 
and obtain four pairwise disjoint vertex sets~$A_1, A_2\subseteq A$ and 
$B_1, B_2\subseteq B$ each of size~$m\geq\mu n/2$
such that the induced bipartite graphs~$P[A_1, A_2]$ and~$Q[B_1, B_2]$ 
are both~$\eta$-regular with density at least~$\xi$. 

Next for $i=1$, $2$ we consider the $3$-partite subhypergraph $H[B_i,P[A_1,A_2]]$ on 
$A_1\dcup A_2\dcup B_i$ with the edge set
\[
	\big\{\{x,y,z\}\in V^{(3)}\colon x\in B_i\tand \{y,z\}\in E(P[A_1,A_2])\big\}\,.
\]
Lemma~\ref{cleaning} applied to $H[B_i,P[A_1,A_2]]$ with $\beta=\rho$ yields a subhypergraph 
$H^{i,P}_\rho$. Since our choice of $\rho$ guarantees
\[
	\rho n^3+\rho(3m)^3
	\leq
	28\rho n^3
	\leq 
	\rho' \cdot |B_i|\cdot e(P[A_1,A_2])
\]
it follows from the $\ev$-density of $H$, 
that $H^{i,P}_\rho$ is~$(\rho', 1/4+\varepsilon, \ev)$-dense over~$(B_i,P[A_1,A_2])$. 
Similarly, for $i=1$, $2$  we also define the $3$-partite hypergraph~$H^{i,Q}_\rho$ with vertex partition $B_1\dcup B_2\dcup A_i$ and note that it is
$(\rho', 1/4+\varepsilon, \ev)$-dense over~$(A_i,Q[B_1,B_2])$. 

Applying Lemma~\ref{025} to the bipartite graph $P[A_1,A_2]$  and the $3$-partite 
hypergraph $H^{1,P}_\rho$ implies
\[
	\big|\partial H^{1,P}_{\rho}[A_1, B_1]\big|+\big|\partial H^{1,P}_{\rho}[A_{2},B_1]\big|
	\geq
	(1+\eps)m^2.
\]
Moreover, three further applications of Lemma~\ref{025} to 
$P[A_1,A_2]$ with $H^{2,P}_\rho$ and to $Q[B_1,B_2]$ 
with $H^{1,Q}_\rho$ and with $H^{2,Q}_\rho$ show that 
\[
	\sum_{i=1}^2\Big(
	\big|\partial H^{i,P}_{\rho}[A_1, B_i]\big| 
	+
	\big|\partial H^{i,P}_{\rho}[A_2, B_i]\big|\Big) 
	+ 
	\sum_{i=1}^2\Big(
	\big|\partial H^{i,Q}_{\rho}[B_1,A_i]\big|
	+
	\big|\partial H^{i,Q}_{\rho}[B_2,A_i]\big|
	\Big)
	\geq 
	4(1+\varepsilon)m^2.
\]
In particular, rearranging the terms shows that 
\[
	\sum_{i=1}^2\sum_{j=1}^2 \Big(
	\big|\partial H^{j,P}_{\rho}[A_i, B_j]\big| 
	+ 
	\big|\partial H^{i,Q}_{\rho}[B_j,A_i]\big|\Big)
	\geq 
	4(1+\eps)m^2
\]
and, hence, there are some indices $i_0$, $j_0\in \{1,2\}$ such that 
\[
	\big|\partial H^{j_0,P}_{\rho}[A_{i_0}, B_{j_0}]\big| 
	+ 
	\big|\partial H^{i_0,Q}_{\rho}[B_{j_0},A_{i_0}]\big|
	\geq
	(1+\eps)m^2\,.	
\]
Consequently, set of ordered pairs
\[
	R
	=
	\big\{
	\{y,z\}\in V^{(2)}\colon
	(y,z)\in \partial H^{j_0,P}_{\rho}[A_{i_0},B_{j_0}]
	\tand 
	(z,y)\in 
	\partial H^{i_0,Q}_{\rho}[B_{j_0},A_{i_0}]
	\big\}
\]
has size at least $\eps m^2$.

Finally, we note that every $\{y,z\}\in R$ has positive degree 
in both hypergraphs $H^{j_0,P}_{\rho}$ and~$H^{i_0,Q}_{\rho}$ and, hence, these degrees are at least $3\rho m$. Therefore, there are at least $9\rho^2 m^2$ 
distinct vertices $x\in A_{3-i_0}$ and $w\in B_{3-j_0}$ 
such that $xyz$ and $yzw$ form a $(P,Q)$-cherry. 
Summing over all pairs in $R$ yields at least  
\[
	\eps m^2 \cdot 9\rho^2 m^2
	\geq
	\nu n^4
\]
$(P,Q)$-cherries in $H$. 
\end{proof}

The following corollary allows us to find many connections between a large sets of unordered 
and a large set of ordered pairs.

\begin{lemma}\label{connection}
For every~$\xi$, $\epsilon \in (0,1]$ there 
exist~$\zeta$, $\rho >0$ such that for every sufficiently 
large~$(\rho, 1/4+\varepsilon, \ev)$-dense $n$-vertex hypergraph~$H=(V,E)$ the following holds. 

Let~$P\subseteq V\times V$ be a set of ordered pairs
and let~$Q\subseteq V^{(2)}$ be a set of unordered pairs, 
each of size at least~$\xi n^2$. 
There is an~$\ell\in\{2,4\}$ such that there are at 
least~$\zeta n^{\ell+2}$ tight paths of length~$\ell$ which start with an ordered pair from~$P$ 
and ends in (some ordering of) with a pair from~$Q$. 
\end{lemma}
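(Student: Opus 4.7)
The plan is to apply Lemma~\ref{cherries} to the unordered projection $P^{\mathrm{un}}=\{\{x,y\}\colon(x,y)\in P\}$ of $P$ together with $Q$. Since $|P^{\mathrm{un}}|\ge|P|/2\ge\xi n^2/2$, after a mild rescaling of $\xi$, one application of Lemma~\ref{cherries} produces $\nu n^4$ cherries, each a length-$2$ tight path $xyzw$ with $\{x,y\}\in P^{\mathrm{un}}$ and $\{z,w\}\in Q$; so the starting ordered pair is $(x,y)$ and the ending pair is $\{z,w\}\in Q$. Call the cherry \emph{consistent} if $(x,y)\in P$ and \emph{inconsistent} otherwise, in which case necessarily $(y,x)\in P$. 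If at least half of the cherries are consistent then $\ge\nu n^4/2$ of them witness the conclusion with $\ell=2$.

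Otherwise, I handle the inconsistent cherries by prepending two vertices on the $P$-side. For each inconsistent cherry $xyzw$, any pair $(u,v)\in P$ with $uvx,vxy\in E$ yields a length-$4$ tight path $uvxyzw$ starting in $P$ and ending in $Q$. To produce many such configurations, first invoke Lemma~\ref{cleaning} with a small $\beta$ so that every surviving pair of positive codegree has codegree $\ge\beta n$. Next, let $S\subseteq V^{(2)}$ collect the unordered pairs $\{x,y\}$ that start $\Omega(n^2)$ inconsistent cherries; an averaging argument gives $|S|=\Omega(n^2)$. A second application of Lemma~\ref{cherries} to $(P^{\mathrm{un}},S)$ then provides $\Omega(n^4)$ prepending cherries $uvxy$ with $\{u,v\}\in P^{\mathrm{un}}$ and $\{x,y\}\in S$. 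Together with the $\ev$-density bound $\sum_{(x,y)}C_P(x,y)=\Omega(n^4)$ on the prepending count $C_P$ (obtained from $\ev$-density applied to $V\times P$ combined with cleaning), a double-counting argument over pairings of a prepending cherry with an inconsistent cherry that agree at the ordered middle pair $(x,y)$ should yield the required $\ge\zeta n^6$ length-$4$ tight paths.

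The principal obstacle is orientation matching at the middle pair: the tight-path ending of a prepending cherry is determined by the cherry's leaf structure rather than being a free choice, so it must agree with the tight-path starting of an inconsistent cherry; on top of that, each prepending cherry itself carries its own consistent/inconsistent split relative to $P$. The key idea to resolve this is that each structural cherry (a pair of edges sharing two vertices) supports several tight-path orderings via its various $(p,q)$-cherry labelings, so by counting $(P^{\mathrm{un}},S)$-cherries across all compatible orderings and combining with a Cauchy--Schwarz step on the joint support of $C_P$ and the inconsistent-cherry count within $S$, one extracts a constant fraction of orientation-compatible compositions and hence the desired $\Omega(n^6)$ length-$4$ tight paths.
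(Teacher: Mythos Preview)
Your opening move matches the paper: apply Lemma~\ref{cherries} to $(\overline P,Q)$, where $\overline P$ is the unordered projection of $P$, and split the resulting cherries $xyzw$ into consistent ($(x,y)\in P$) and inconsistent ($(y,x)\in P$ only). If a constant fraction are consistent, $\ell=2$ works.

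The gap is in your treatment of the inconsistent case. Prepending a cherry on the $P$-side does not dissolve the orientation problem; it duplicates it. A second application of Lemma~\ref{cherries} to $(P^{\mathrm{un}},S)$ produces cherries $uvxy$ whose starting pair $\{u,v\}$ is again only known to lie in $P^{\mathrm{un}}$, not in $P$ with the orientation the cherry hands you, and whose ending orientation $(x,y)$ need not agree with the starting orientation of any inconsistent cherry supported on $\{x,y\}$. You acknowledge both issues, but the proposed remedy --- ``counting across all compatible orderings and combining with a Cauchy--Schwarz step'' --- is not a mechanism. There is no convexity here that Cauchy--Schwarz can leverage to force a constant fraction of orientation matches; nothing rules out that almost all prepending cherries are themselves inconsistent on the $P$-side, or that their $S$-end systematically lands in the orientation opposite to the one carrying the inconsistent $(\overline P,Q)$-cherries. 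In effect your reduction replaces ``connect ordered $P$ to unordered $Q$ by a length-$2$ path'' with ``connect ordered $P$ to an ordered target by a length-$2$ path'', which is the same problem.

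The paper's resolution is different and does not iterate Lemma~\ref{cherries}. Having $\Omega(n^4)$ inconsistent cherries means $H$ contains $\Omega(n^4)$ copies of the two-edge path $xyzw$ decorated with $(y,x)\in P$ and $\{z,w\}\in Q$. Lemma~\ref{blowup} then yields $\Omega(n^6)$ copies of the blowup obtained by doubling the two middle vertices: $6$-tuples $(x,y_1,y_2,z_1,z_2,w)$ with $(y_i,x)\in P$, $\{z_j,w\}\in Q$, and $xy_iz_jw$ a tight path for all $i,j\in\{1,2\}$. Each such $6$-tuple already contains a correctly oriented length-$4$ tight path, namely
\[
y_1\,x\,z_1\,y_2\,w\,z_2\,,
\]
starting at $(y_1,x)\in P$ and ending at $\{w,z_2\}\in Q$; the four required edges $\{y_1,x,z_1\}$, $\{x,z_1,y_2\}$, $\{z_1,y_2,w\}$, $\{y_2,w,z_2\}$ are read off directly from the blown-up cherry. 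This single rerouting inside the blowup replaces your entire prepend-and-match scheme.
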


\begin{proof}
Given~$\xi$ and~$\epsilon$ we apply Lemma~\ref{cherries} with $\xi/6$ and $\eps$ 
and obtain~$\rho$ and~$\nu$. Lemma~\ref{blowup} applied for $\nu/2$, $4$, and $6$ 
(in place of $\xi$, $k$, and $\l$ in Lemma~\ref{blowup}) yields the promised constant~$\zeta>0$.
With out loss of generality we may assume that $\zeta<\nu/2$ and let $n$ be sufficiently large.

For a given set of ordered pairs $P\subseteq V\times V$ 
let~$\overline P$ be the set of unordered pairs obtained from~$P$ by ignoring the order. 
In particular, $|\overline{P}|\geq |P|/2\geq \xi n^2/2$ and Lemma~\ref{cherries} asserts 
that there are~$\nu n^4$ different $(\overline P, Q)$-cherries. 
That is to say there are~$\nu n^4$ tight paths on four vertices of the form $xyzw$ 
where~$\{x, y\}\in \overline P$ and~$\{z,w\}\in Q$.

If for~$\zeta n^4$ of those cherries we have that~$(x, y)\in P$, then the lemma follows 
with $\l=2$. Hence, we may assume that for at least~$(\nu-\zeta)n^4\geq \nu n^4/2$ 
of those tight paths we (only) have~$(y, x)\in P$. Consequently, Lemma~\ref{blowup} yields 
$\zeta n^6$ blowups of these two edge paths where the vertices $y$ and $z$ are doubled, i.e., $H$ contains $\zeta n^6$ 6-tuples of distinct vertices $(x,y_1,y_2,z_1,z_2,w)$ such that for every $i$, $j\in\{1,2\}$ we have 
\[
	(y_i,x)\in P\,,\quad 
	\{z_j,w\}\in Q\,,
	\qand 
	xy_iz_jw\ \text{is a tight path with two edges.}
\]
In particular, every such 6-tuple induces a tight path $y_1xz_1y_2wz_2$ which starts with an ordered pair from~$P$ 
and ends in an unordered pair from~$Q$ and this concludes the proof of the lemma.
\end{proof}

For establishing the Connecting Lemma (Lemma~\ref{Connecting_Lemma}) we shall extend
Lemma~\ref{connection} in such a way that we can connect large sets $P$ and $Q$, where both of them consist of ordered pairs. For that certain blowups of~$K_4^{(3)-}$s will be useful 
and we introduce the following notation.

\begin{dfn}\label{def:turnable}
We say a $7$-tuple of distinct vertices $(a_1,a_2, a_3, b_1,b_2, c,d)\in V^7$ is a \emph{turn} in a hypergraph $H=(V,E)$
if for every $i \in \{1, 2,3\}$ and~$j\in \{1,2\}$ the set~$\{a_i, b_j, c, d\}$ 
spans a copy of a~$K_4^{(3)-}$ in $H$ with~$a_i$ being the apex.
\end{dfn}
Combining Theorem~\ref{K4-} and Lemma~\ref{blowup} shows that the hypergraphs with $\vvv$-density bigger than $1/4$ contain many turns. Moreover, we 
observe that in a turn~$T$ the tight paths
\begin{equation}\label{eq:flipu}
	a_1 b_1 c a_2 b_2\,,\quad
	a_1 b_1 c a_3 d b_2 a_2 \,,\quad
	b_1 a_1 c d a_2 b_2\,,\qand
	b_1 a_1 c b_2 a_2
\end{equation}
with at most~$3$ inner vertices connect the pairs~$\{a_1,b_1\}$ and~$\{a_2,b_2\}$ in all four possible orientations. This motivates the following definition.
\begin{dfn}
	For a hypergraph $H=(V,E)$ we say two disjoint unordered pairs $q$, $q'\in V^{(2)}$ 
	are \emph{$(\theta,L)$-turnable}, if for every ordering 
	$(q_1,q_2)$ of $q$ and every ordering $(q'_1,q'_2)$ of $q'$ there exists 
	some positive integer $\l\leq L$ such that the number of tight $(q_1,q_2)$-$(q'_1,q'_2)$-paths 
	in~$H$ with $\l$ inner vertices is at least $\theta |V|^\l$. 
\end{dfn}
It follows from~\eqref{eq:flipu} that pairs $\{a_1,b_1\}$ and $\{a_2,b_2\}$ that are contained in $\Omega(|V|^3)$ turns are $(\theta,3)$-turnable for some sufficiently small $\theta>0$. 
The following variation of this fact, will be useful in the proof of the Connecting Lemma.
\begin{lemma}\label{lem:turns}
	For every $\epsilon \in (0,1]$ there 
	exist~$\theta$, $\rho >0$ such that for every sufficiently 
	large~$(\rho, 1/4+\varepsilon, \vvv)$-dense
	hypergraph~$H=(V,E)$ the following holds. 

	There exists a set $Q\subseteq V^{(2)}$ of size at least $\theta |V|^2$
	such that for every $q\in Q$ there exists a set 
	$Q'(q)\subseteq V^{(2)}$ of size at least $\theta |V|^2$ 
	such that $q$ and $q'$ are $(\theta,3)$-turnable for every $q'\in Q'(q)$.
\end{lemma}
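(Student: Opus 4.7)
The strategy is to produce many ``turns'' via a blowup argument, then use two rounds of averaging to extract the required sets $Q$ and $Q'(q)$, and finally translate the turn count into the requisite path counts.

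\emph{Step 1: producing turns.} A turn, regarded as the $7$-vertex hypergraph with the $15$ edges implicit in Definition~\ref{def:turnable}, admits a homomorphism onto $K_4^{(3)-}$ obtained by sending each $a_i$ to the apex and sending $b_1, b_2, c, d$ to the three base vertices (with $b_1, b_2$ collapsed together). Theorem~\ref{K4-} supplies $\xi n^4$ copies of $K_4^{(3)-}$ in $H$, and hence Lemma~\ref{blowup} produces a constant $\zeta = \zeta(\eps) > 0$ together with at least $\zeta n^7$ labelled turns in $H$.

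\emph{Step 2: extracting $Q$ and $Q'(q)$.} For each ordered pair $(q, q')$ of disjoint unordered pairs, let $T(q, q')$ count the labelled turns with $\{a_1, b_1\} = q$ and $\{a_2, b_2\} = q'$. Then $\sum_{(q, q')} T(q, q') \geq \zeta n^7$, while $T(q, q') \leq n^3$ (only $a_3, c, d$ remain free) and the sum ranges over at most $n^4$ ordered pairs. A standard averaging yields
\[
	S := \{(q, q') : T(q, q') \geq \tfrac{\zeta}{2} n^3\}
	\qquad \text{with} \qquad |S| \geq \tfrac{\zeta}{2} n^4,
\]
and a second averaging step over the first coordinate of $S$ produces $Q \subseteq V^{(2)}$ with $|Q| \geq \theta n^2$ such that $Q'(q) := \{q' : (q, q') \in S\}$ satisfies $|Q'(q)| \geq \theta n^2$ for every $q \in Q$, for some $\theta = \theta(\zeta) > 0$ sufficiently small.

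\emph{Step 3: turnability of pairs in $S$.} Fix $(q, q') \in S$; the task is to verify that $q$ and $q'$ are $(\theta, 3)$-turnable. Partition the turns contributing to $T(q, q')$ into four sub-counts $T_{LL}, T_{LR}, T_{RL}, T_{RR}$ according to which vertex of $q$ (resp.\ $q'$) plays the apex r\^ole of $a_1$ (resp.\ $a_2$) in the turn labelling. By the four path formulas in~\eqref{eq:flipu}, each labelled turn produces \emph{simultaneously}, in the four orderings of $(q, q')$, tight paths of lengths $1, 1, 2, 3$ inner vertices; a direct case-analysis identifies which sub-count contributes length-$\l$ paths to which orientation and shows that within each orientation all three values $\l \in \{1, 2, 3\}$ arise from some prescribed sub-count. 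Moreover, a length-$\l$ path is produced by at most $O(n^{3-\l})$ turns, bounding how the free vertices among $\{a_3, c, d\}$ not used by the path can be completed into a valid turn. Pigeonhole on $T(q, q') = T_{LL} + T_{LR} + T_{RL} + T_{RR} \geq \tfrac{\zeta}{2} n^3$ therefore guarantees that, in each of the four orderings of $(q, q')$, some $\l \leq 3$ admits at least $\theta n^\l$ distinct paths, which is exactly the $(\theta, 3)$-turnability. The only delicate point of the argument is the bookkeeping of the path-length distribution across sub-counts and orientations, but this is a direct check against~\eqref{eq:flipu} and poses no real obstacle.
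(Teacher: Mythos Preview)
Your argument is correct and follows the same outline as the paper's proof: produce $\zeta n^7$ turns via Theorem~\ref{K4-} and Lemma~\ref{blowup}, average twice to locate the sets $Q$ and $Q'(q)$, and read off the four connecting paths from~\eqref{eq:flipu} together with the overcount bound $n^{3-\ell}$ for length-$\ell$ paths.

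The one organisational difference is that the paper carries out the double averaging over \emph{ordered} pairs $(a,b)$ with the convention that $a$ already plays the apex r\^ole $a_1$ (resp.\ $a_2$); it then passes to unordered pairs only at the very end. In that setup, every turn counted for a fixed $(a,b)$ and $(a',b')$ already has a single fixed role-assignment, so the four paths in~\eqref{eq:flipu} apply directly and no sub-count split is needed. You instead average over unordered pairs first and then pigeonhole over the four role-assignments $T_{LL},T_{LR},T_{RL},T_{RR}$; this costs an extra factor of $4$ and a short case-check (which you carried out correctly: each sub-count contributes to all four orientations, with lengths $1,3,2,1$ in some order). Both routes are equally valid; the paper's ordering is slightly slicker, but yours makes the r\^ole of~\eqref{eq:flipu} more transparent.
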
 
\begin{proof}
	Let~$H=(V,E)$ be a sufficiently large~$(\rho,1/4+\epsilon,\vvv)$-dense hypergraph 
	on $n$ vertices. A combined application of Theorem~\ref{K4-} and Lemma~\ref{blowup}
	yields a set $\cT\subseteq V^7$ of at least~$\zeta n^7$ turns 
	$(a_1,a_2,a_3,b_1,b_2,c,d)$ in~$H$ for some sufficiently small $\zeta=\zeta(\eps)>0$ and
	we shall deduce the conclusion of the lemma for 
	\[
		\theta=\frac{\zeta}{8}\,.
	\] 
	
	For every pair $(a,b)\in V\times V$ and $i\in\{1,2\}$ let 
	$\cT_i(a,b)$ be the set of such turns where $a$ and $b$ play the r\^oles of $a_i$ and~$b_i$, 	respectively. We consider the set 
	\[
		\cT^{\star}=\big\{(a,a',a_3,b,b',c,d)\in\cT\colon 
		|\cT_1(a,b)\cap\cT_2(a',b')|\geq \zeta n^3/2\big\}
	\]
	and note that $|\cT^{\star}|\geq \zeta n^7/2$.
	By a standard averaging argument there are at least $\zeta n^2/4$ pairs 
	$(a,b)\in V\times V$ for which we have 
	\[
		|\cT_1(a,b)\cap \cT^{\star}|\geq \frac{\zeta}{4} n^5
	\]
	and we denote the set of these ordered pairs by $R$. 
	Note that for every pair $(a,b)\in R$ 
	there is a set $R'(a,b)\subseteq V\times V$ with 
	\begin{equation}\label{eq:Rprime}
		|R'(a,b)|\geq \frac{\zeta}{4} n^2
		\ \text{such that}\
		\big|\cT_1(a,b)\cap\cT_2(a',b')\big|
		\geq 
		\frac{\zeta}{2} n^3
	\end{equation}
	for every $(a',b')\in R'(a,b)$. Finally, let $Q$ be the set of unordered pairs 
	derived from $R$, i.e., 
	\[
		Q=\big\{\{q_1,q_2\}\in V^{(2)}\colon (q_1,q_2)\in R\big\}
	\]
	and for every $q=\{q_1,q_2\}$ set 
	\[
		Q'(q)=\big\{\{q'_1,q'_2\}\in V^{(2)}\colon 
			(q'_1,q'_2)\in R'(q_1,q_2)\cup R'(q_2,q_1)\big\}\,.
	\]
	Clearly,
	\[
		|Q|\geq \frac{|R|}{2}\geq \frac{\zeta}{8} n^2=\theta n^2
		\qand 
		Q'(q)\overset{\eqref{eq:Rprime}}{\geq} \frac{\zeta}{8} n^2=\theta n^2
	\]  
	and the required number of tight paths for every orientation of $q\in Q$ 
	and $q'\in Q'(q)$ follows from~\eqref{eq:flipu} and~\eqref{eq:Rprime}.
\end{proof}
Roughly speaking, the proof of Lemma~\ref{Connecting_Lemma} follows from  
Lemmata~\ref{connection} and~\ref{lem:turns}. The definition of connectable pairs allows us to move from the given ordered pairs $(x,y)$ and~$(w,z)$, that need to be connected, to large sets of ordered pairs $P$, $P'$, by considering their second neighbourhoods.
Moreover, Lemma~\ref{lem:turns} yields sets $Q\subseteq V^{(2)}$ and 
$Q'(q)\subseteq V^{(2)}$ for every $q\in Q$ of turnable pairs. Applying Lemma~\ref{Connecting_Lemma} first 
to $P$ and $Q$ and then to $P'$ and $Q'(q)$ for all $q\in Q$ leads to the desired tight 
$(x,y)$-$(z,w)$-paths. 

\begin{proof}[Proof of Lemma~\ref{Connecting_Lemma}]
For given~$\epsilon$, $\beta>0$ let $\theta$ and $\rho_1$ be the constants provided by Lemma~\ref{lem:turns}.
We set
\[
	\xi=\min\{\theta, \beta^2\}
\]
and Lemma \ref{connection} applied with $\xi$ and $\eps$ yields $\zeta$ and $\rho_2$.
Finally, we define the promised constants 
\[
	\rho=\min\{\rho_1,\rho_2\}\qqand
	\alpha=\frac{\zeta^2\theta}{13}\,.
\]

Let~$H=(V,E)$ be a sufficiently large~$(\rho,1/4+\epsilon,\ev)$-dense hypergraph on $n$ vertices 
and let~$(x,y)$, $(w,z)$ be two disjoint $\beta$-connectable pairs. Consider the second neighbourhoods of these pairs defined by
\begin{equation}\label{eq:CLPP'}
	P=\{(u,v)\in V\times V\colon xyu,\, yuv\in E\}
	\qand
	P'=\{(u',v')\in V\times V\colon wzu',\, zu'v'\in E\}\,.
\end{equation}
Owing to the $\beta$-connectability, both sets $P$ and $P'$ have size at 
least~$\beta^2 n^2\geq \xi n^2$. 

Next, let $Q\subseteq V^{(2)}$ and 
$Q'(q)\subseteq V^{(2)}$ for every $q\in Q$ be the sets of size at least 
$\theta n^2\geq \xi n^2$ provided by Lemma~\ref{lem:turns}. For every $q\in Q$ we denote by 
$P_4(q)$ (resp.\ $P_6(q)$) the number of tight $(u,v)$-$(q_1,q_2)$-paths having $4$ (resp.\ 6)
vertices and $(u,v)\in P$ and $\{q_1,q_2\}=q$. Moreover, we normalise these numbers 
by
\[
	\eta_P(q)=\max\Big\{\frac{P_4(q)}{n^4}\,,\frac{P_6(q)}{n^6}\Big\}
\]
and note that Lemma~\ref{connection} applied to $P$ and $Q$
ensures 
\begin{equation}\label{eq:Pzeta}
	\sum_{q\in Q}\eta_P(q)\geq\zeta\,.
\end{equation}
Analogously, we define $P'_4(q')$, $P'_6(q')$, and $\eta_{P'}(q')$ for every 
$q'\in \bigcup_{q\in Q} Q'(q)$ and Lemma~\ref{connection} applied to $P'$ and $Q'(q)$
implies 
\begin{equation}\label{eq:Pzeta'}
	\sum_{q'\in Q'(q)}\eta_{P'}(q')\geq\zeta\,.
\end{equation}
for every $q\in Q$.
Recall, that the paths accounted for in~\eqref{eq:Pzeta} and~\eqref{eq:Pzeta'}
induce an ordering of the vertices in $q$ and in $q'$. However, by Lemma~\ref{lem:turns} the pairs $q$ and $q'$ are 
$(\theta,3)$-turnable for every $q\in Q$ and $q'\in Q'(q)$, which means that these pairs can be connected for any possible orientation. Consequently, there is some $\l$ with
\[
	5
	\leq
	\l 
	\leq 
	\max\{4,6\}+ \max\{1,2,3\}+ \max\{4,6\}
	=
	15
\]
such that the number of $(x,y)$-$(z,w)$-walks in $H$ is at least 
\[
	\frac{n^\l}{12} \cdot \sum_{q\in Q}\eta_P(q)\cdot \theta\cdot \sum_{q'\in Q'(q)}\eta_{P'}(q')
	\overset{\eqref{eq:Pzeta'}}{\geq}
	\frac{n^\l}{12} \cdot \sum_{q\in Q}\eta_P(q)\cdot\theta\cdot \zeta
	\overset{\eqref{eq:Pzeta}}{\geq}
	\frac{\zeta^2\theta}{12}n^{\l}\,. 
\]
At most $O(n^{\l-1})$ of these walks might not be a path and, hence, the lemma follows for sufficiently large~$n$.
\end{proof}

\section{Absorbing path}
\label{SectionAbsorbingPath}

We dedicate this section to the proof of Lemma~\ref{AbsorbingPathLemma}. Similarly as 
in~\cite{5/9} the absorbers we consider here have two parts. Moreover, we use an idea
of Polcyn and Reiher~\cite{PR}, which reduces the abundant existence of absorbers 
to a degenerate Tur\'an problem 
on the price that we can only absorb exactly three vertices at each time. 

Consider first  the complete~$3$-partite hypergraph~$K^{(3)}_{3,3,3}$ with parts~$A_i=\{x_i,y_i,z_i\}$, for every~$i=1,2,3$. Note that this hypergraph contains the tight paths
\begin{equation}
\label{path}
x_1 x_2 x_3 y_1 y_2 y_3 z_1 z_2 z_3\,,
\end{equation}
and
\begin{equation}
\label{path'}
x_1 x_2 x_3 z_1 z_2 z_3\,.
\end{equation}

This means that from every copy of~$K^{(3)}_{3,3,3}$, ordered as a tight path like in \eqref{path}, we may remove the three inner vertices~$y_1,y_2,y_3$ to obtain a tight path with the same ends.  Since we only consider dense hypergraphs, we can guarantee that many copies~$K^{(3)}_{3,3,3}$ exist. In other words, in such a situation 
the tight path $x_1 x_2 x_3 z_1 z_2 z_3$ could absorb the three vertices $y_1$, $y_2$, and~$y_3$. However, not every triple might be contained in a~$K^{(3)}_{3,3,3}$ and this will be addressed by the second part of the absorbers used here.

Suppose we want to absorb some arbitrary vertices~$v_1$, $v_2$, and~$v_3$. The idea, similarly as in~\cite{5/9}, is to exchange~$v_i$ with~$y_i$ contained in some~$K^{(3)}_{3,3,3}$. Suppose we have found a~$K^{(3)}_{3,3,3}$ as described above, but additionally we find a path (as a graph) on four vertices with edges from~$N_H(v_i)\cap N_H(y_i)$ disjointly for each~$i=1,2,3$. We argue that this whole structure can absorb~$v_1,v_2,v_3$. Indeed, if~$a_ib_ic_id_i$ is a path on four vertices with edges from $N_H(v_i)\cap N_H(y_i)$, then both~$P(v_i) = a_ib_iv_ic_id_i$ and~$P(y_i)= a_ib_iy_ic_id_i$ are tight paths in the hypergraph and with the same endings. Moreover, the minimum degree and the uniform density imply that for each vertex~$v\in V$, most vertices of~$V$ have~$\Omega (n^2)$ common neighbours with~$v$, which is enough to find such paths.

Therefore, if we choose to absorb~$v_1,v_2,v_3$, we will consider the tight paths~$P(v_1)$,~$P(v_2)$, and~$P(v_3)$ and the tight path of~$K^{(3)}_{3,3,3}$ as in \eqref{path}. On the other hand, if we choose not to absorb them, then we consider the tight paths~$P(y_1)$,~$P(y_2)$, and~$P(y_3)$ and the tight path of~$K^{(3)}_{3,3,3}$ as in \eqref{path'}. We will also show that for each triple of vertices, we can find many of these configurations, so that we can choose a small amount of them that still can absorb every triple and also connect them into a single tight path. Observe that this absorbing path can only absorb sets of vertices with size divisible by three, an issue with which we deal later. First we prove that for every triple there are many absorbers.

\begin{dfn}
\label{defabs}
Let~$H=(V,E)$ be a hypergraph and~$(v_1,v_2,v_3)\in V^{3}$. We say
\[
	A=(K,P_1,P_2,P_3)\in V^9\times V^4\times V^4\times V^4\,,
\] 
with~$K=(x_1, x_2, x_3, y_1, y_2, y_3, z_1, z_2, z_3)$ and~$P_i= (a_i,b_i,c_i,d_i)$
is an \emph{absorber for~$(v_1,v_2,v_3)$} if the ordered sets
\begin{enumerate}[label=\rmlabel]
	\item $x_1 x_2 x_3 y_1 y_2 y_3 z_1 z_2 z_3$, $x_1 x_2 x_3 z_1 z_2 z_3$,
	\item $a_ib_iv_ic_id_i$ and~$a_ib_iy_ic_id_i$ for~$i=1,2,3$
\end{enumerate}
induce tight paths in~$H$. 
All hyperedges of those paths that do not include a vertices from~$\{v_1, v_2, v_3\}$
are called \emph{internal edges} of the absorber $A$.
\end{dfn}
Formally absorbers are defined to be four tuples. However, sometimes it will be convenient to view them as $21$-tuples of vertices.
\begin{lemma}
\label{abs}
For all~$d$, $\epsilon \in (0,1]$ there exist~$\rho$, $\xi>0$ such that for 
sufficiently large~$n$ the following holds.

For every ~$(\rho,d, \ev)$-dense hypergraph~$H=(V,E)$ on~$n$ vertices 
with~$\delta_1(H)\geq \epsilon n^2$ and every triple $T=(v_1,v_2,v_3)\in V^{3}$ 
of distinct vertices there are at least~$\xi n^{21}$ absorbers for $T$.
\end{lemma}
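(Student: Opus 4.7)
The plan is to view each absorber as a choice of nine vertices inducing a labelled copy of $K^{(3)}_{3,3,3}$ together with three ordered $4$-paths $(a_i, b_i, c_i, d_i)$, one for each $i\in\{1,2,3\}$, whose three consecutive pairs all lie in the common link $L(v_i)\cap L(y_i)$; here $L(u)$ denotes the graph on $V$ with edge set $N_H(u)$. Unpacking Definition~\ref{defabs}, condition~(i) asks that the nine vertices span $K^{(3)}_{3,3,3}$ in $H$ (with parts $\{x_i, y_i, z_i\}$ for $i=1,2,3$), and condition~(ii) asks, for each $i$, that $(a_i, b_i, c_i, d_i)$ be a path on four vertices in $L(v_i)\cap L(y_i)$. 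It therefore suffices to exhibit $\Omega(n^9)$ labelled copies of $K^{(3)}_{3,3,3}$ in $H\smallsetminus\{v_1,v_2,v_3\}$ in which the common link $L(v_i)\cap L(y_i)$ is dense for every $i$, together with $\Omega(n^4)$ ordered $4$-paths in each such common link; $\Omega(n^{21})$ absorbers then follow by multiplication and standard bookkeeping to enforce vertex-disjointness.

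For the many-$K^{(3)}_{3,3,3}$ step, $(\rho, d, \ev)$-density implies $(\rho, d, \vvv)$-density, so $e(H)\geq (d-\rho)n^3/6$ and $H$ contains $\Omega(n^3)$ labelled copies of the single hyperedge $K^{(3)}_{1,1,1}$. Since $K^{(3)}_{3,3,3}\xrightarrow{hom} K^{(3)}_{1,1,1}$ by collapsing each part to a single vertex, Lemma~\ref{blowup} yields $\Omega(n^9)$ labelled copies of $K^{(3)}_{3,3,3}$ in $H$. To locate dense common links, fix $v\in V$ and apply Lemma~\ref{LemmaRegularDegree} with $X = V$ and $P = N_H(v)$: since $|N_H(v)|\geq \epsilon n^2$, the global $\ev$-density of $H$ translates into $(\rho/\epsilon, d, \ev)$-density over $(V, N_H(v))$, and hence all but $\sqrt{\rho/\epsilon}\,n$ vertices $y\in V$ satisfy $|L(v)\cap L(y)|\geq (d - \sqrt{\rho/\epsilon})|N_H(v)| = \Omega(n^2)$. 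Discarding labelled $K^{(3)}_{3,3,3}$-copies in which some $y_i$ is bad for the corresponding $v_i$ (or in which the nine vertices intersect $\{v_1,v_2,v_3\}$) removes only $O(\sqrt{\rho/\epsilon})\,n^9 + O(n^8)$ tuples, leaving $\Omega(n^9)$ good copies.

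Finally, a graph on $n$ vertices with $\Omega(n^2)$ edges contains $\Omega(n^4)$ ordered paths on four vertices, which is either a direct convexity computation or an instance of the graph analogue of Lemma~\ref{blowup} applied to $P_4\xrightarrow{hom}K_2$. For each good copy and each $i\in\{1,2,3\}$, this yields $\Omega(n^4)$ candidates $(a_i, b_i, c_i, d_i)$ in $L(v_i)\cap L(y_i)$; at most $O(n^3)$ of them meet the at most $21$ previously fixed vertices, so $\Omega(n^4)$ valid choices survive. Multiplying, $\Omega(n^9)\cdot\Omega(n^4)^3 = \Omega(n^{21})$ absorbers exist, as required. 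The main obstacle is the careful bookkeeping of density constants under restriction: localising the $\ev$-density to $(V, N_H(v))$ degrades the error parameter from $\rho$ to $\rho/\epsilon$, so one must choose $\rho$ small enough (roughly $\rho\ll d^2\epsilon^3$) to guarantee that the set of bad $y$'s remains a genuinely negligible fraction; beyond this, the argument is a routine combination of Lemmata~\ref{blowup} and~\ref{LemmaRegularDegree} with path counting in dense graphs.
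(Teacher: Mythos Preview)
Your proposal is correct and follows essentially the same approach as the paper: identify the vertices with dense common link with each $v_i$ via Lemma~\ref{LemmaRegularDegree}, count $\Omega(n^9)$ labelled copies of $K^{(3)}_{3,3,3}$ avoiding the bad vertices, then count $\Omega(n^4)$ four-vertex paths in each common link and multiply. The only cosmetic differences are that the paper cites Erd\H{o}s's result~\cite{erdos1964extremal} directly (with an explicit exponent) rather than routing through Lemma~\ref{blowup}, and cites the Blakley--Roy inequality~\cite{BR} for the $P_4$ count rather than a convexity/blow-up argument; conversely, you are slightly more careful than the paper in noting the $\rho\to\rho/\epsilon$ rescaling when localising the $\ev$-density to $(V,N_H(v_i))$.
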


\begin{proof}
Given~$d$ and~$\epsilon$ we define some auxiliary constant 
$\zeta=(d/2)^{27}/3$ and set
\[
	\rho=\frac{1}{36}\left(\frac{d}{2}\right)^{54}
	\qqand
	\xi=\frac{\zeta d^9\eps^9}{2^{11}}\,.
\]
Let~$H=(V,E)$ be a~$(\rho, d, \ev)$-dense hypergraph on~$n$ vertices and consider some
triple of vertices $T=(v_1,v_2,v_3)\in V^{3}$. 

Three applications of Lemma~\ref{LemmaRegularDegree} each with $X=V$ and for $i\in[3]$
with the set of ordered pairs 
\[
	\big\{(u,w)\colon \{u,w\}\in N_H(v_i)\big\}
\]
tells us, that there are at most~$3\sqrt{\rho} n$ 
\emph{bad vertices} $v\in V$ that may fail to satisfy 
\begin{equation}\label{eq:notbad}
	\big|N_H(v)\cap N_H(v_i)\big|
	\geq
	(d-\sqrt{\rho})\big|N_H(v_i)\big|
	\geq
	(d-\sqrt{\rho})\delta_1(H)
	\geq
	\frac{d}{2} \eps n^2
\end{equation}
for some $i\in[3]$.
Moreover, the $(\rho, d, \ev)$-density of $H$ implies that the edge density of $H$ is at least $d-2\rho>d/2$ and since the extremal number of any fixed~$3$-partite hypergraph 
is~$o(n^3)$ we have~$K^{(3)}_{3,3,3}\subseteq H$ for sufficiently large~$n$. In fact, 
the standard proof of this fact from~\cite{erdos1964extremal} yields at least 
$((d/2)^{27}-o(1))n^9$ such copies. Consequently, for sufficiently large $n$
there are at least
\[
	\left(\Big(\frac{d}{2}\Big)^{27}-o(1)\right)n^9-3\sqrt{\rho}n\cdot n^8
	\geq
	\zeta n^9
\]
copies of $K^{(3)}_{3,3,3}$ in $H$ that contain no bad vertex. Let $\cK=\cK_T\subseteq V^9$
be the set of these $K^{(3)}_{3,3,3}$ in $H$.

Consider some~$K=(x_1, x_2, x_3, y_1, y_2, y_3, z_1, z_2, z_3)\in \mathcal{K}$.
Since none of the vertices of~$K$ is bad, for every vertex $v$ from $K$ inequality~\eqref{eq:notbad} holds for every $i\in[3]$. In particular, for every $i\in[3]$ we have 
$|N_H(y_i)\cap N_H(v_i)|\geq d\eps n^2/2$ and it follows from~\cite{BR} that there exist at least~$((d\eps/2)^3-o(1))n^4$ 
paths on four vertices with edges from $N_H(y_i)\cap N_H(v_i)$. 
Consequently, for sufficiently large $n$, there exist at least 
\[
	|\cK|\cdot \left(\Big(\frac{d^3\eps^3}{8}-o(1)\Big)n^4\right)^3
	\geq
	\zeta n^9 \cdot
	\frac{d^9\eps^9}{2^{10}}n^{12}
	\geq
	2\xi n^{21}
\]
$4$-tuples $A=(K,P_1,P_2,P_3)\in V^9\times V^4\times V^4\times V^4$ with 
$P_i$ inducing a path in $N_H(y_i)\cap N_H(v_i)$ for~$i=[3]$. Such an $A$ may only 
fail to be an absorber for~$T$, if it contains some vertex from~$T$ itself or if 
its $21$ vertices are not distinct. However, since there are at most $O(n^{20})$ such 
``degenerate'' $A$'s the lemma follows for sufficiently large~$n$.
\end{proof}

Note that for the proof of Lemma~\ref{abs} positive $\ev$-density was sufficient. However,
to address the aforementioned divisibility issue, we will show that the hypergraphs $H$ considered here contain a copy of~$C_8(4)$, the~$4$-blow-up of the tight cycle on~$8$ vertices. 
For the proof of that, we make use of the assumption that the $\ev$-density of $H$ 
is bigger than $1/4$.

The~$C_8(4)$ is formed by~$8$ cyclicly ordered independent sets~$\{e_i, f_i, g_i, h_i\}_{i\in [8]}$ such that the only edges are the ones with vertices from three consecutive such sets. Note that $C_8(4)$ contains the tight path
\begin{equation}
\label{cyclepath}
e_1e_2\dots e_8f_1f_2\dots f_8g_1g_2\dots g_8 h_1h_2\dots h_8.
\end{equation}
Moreover, by removing the sets~$\{f_i\}_{i\in[8]}$ or $\{f_i,g_i\}_{i\in[8]}$ from the path in \eqref{cyclepath} leads to tight paths with the same ends in $C_8(4)$ 
with~$24$ or~$16$ vertices, respectively. 
We also remark that~$16$,~$24$ and~$32$ are congruent to~$1$,~$0$ and~$2$ modulo~$3$, respectively. Therefore, if we connect such tight path to the absorbing path, we can decide to remove some of the vertices so that the size of the leftover set is divisible by~$3$.

\begin{lemma}
\label{blowupcycle}
For all~$\epsilon>0$ there exist~$\rho$, $\theta>0$ such that every sufficiently large
$(\rho, 1/4+\epsilon, \ev)$-dense hypergraph~$H=(V,E)$ contains $\theta |V|^{32}$ copies of~$C_8(4)$.
\end{lemma}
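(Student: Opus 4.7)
The plan is to reduce to exhibiting $\Omega(|V|^8)$ labelled copies of the tight cycle $C_8$ in $H$ and then to invoke Lemma~\ref{blowup}.

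\textbf{Reduction via the blow-up lemma.} First I would note that the canonical map sending each vertex of the $i$-th part of $C_8(4)$ to the $i$-th vertex of the tight cycle $C_8$ is a hypergraph homomorphism. So it suffices, by Lemma~\ref{blowup} applied with $F=C_8$ and $F'=C_8(4)$, to produce $\Omega(|V|^8)$ labelled copies of $C_8$ in~$H$.

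\textbf{Cycles as chains of four cherries.} Next I would exploit the fact that every tight $C_8$ on vertices $v_1,\ldots,v_8$ decomposes into four cherries
\[
	v_1 v_2 v_3 v_4,\quad v_3 v_4 v_5 v_6,\quad v_5 v_6 v_7 v_8,\quad v_7 v_8 v_1 v_2
\]
in which consecutive cherries share an ordered pair. Conversely, any such cyclic chain of four cherries whose eight underlying vertices are pairwise distinct forms a tight~$C_8$ in~$H$.

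\textbf{Abundance of cherries and counting cyclic chains.} Lemma~\ref{cherries} applied with $P=Q=V^{(2)}$ (whose size $\binom{|V|}{2}$ exceeds the required threshold $3\xi|V|^2$ for $|V|$ large) yields $\Omega(|V|^4)$ cherries in~$H$. I would then introduce the non-negative matrix $H=(h_{p,q})$ indexed by ordered pairs $p,q$ of distinct vertices of~$V$, with $h_{p,q}$ equal to the number of cherries whose first and last ordered pairs are $p$ and~$q$. The total weight $\sum_{p,q} h_{p,q}\geq \Omega(|V|^4)$ is spread over $N=|V|(|V|-1)$ entries, and the number of cyclic chains of four cherries equals $\operatorname{tr}(H^4)$. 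A standard spectral estimate---applying a Perron--Frobenius type bound to the symmetrisation $\widetilde H=H+H^\top$ and exploiting the cherry-reversal symmetry $h_{p,q}=h_{\mathrm{rev}(q),\mathrm{rev}(p)}$ coming from reading a cherry backwards---then produces $\operatorname{tr}(H^4)\geq \Omega(|V|^8)$.

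\textbf{Non-degeneracy and conclusion.} The cyclic chains in which at least two underlying vertices coincide contribute at most $O(|V|^7)$ to $\operatorname{tr}(H^4)$, so $\Omega(|V|^8)$ of the chains correspond to honest tight cycles~$C_8$ on eight distinct vertices. Finally, Lemma~\ref{blowup} delivers the desired $\theta|V|^{32}$ copies of $C_8(4)$.

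\textbf{Main obstacle.} I expect the hard step to be the spectral lower bound $\operatorname{tr}(H^4)\geq \Omega(|V|^8)$ for the non-symmetric matrix~$H$: the sum $\sum h_{p,q}$ and the dimension $N$ are immediate, but one must use the cherry-reversal symmetry carefully in order to pass from a bound on $\operatorname{tr}((H+H^\top)^4)$ to one on $\operatorname{tr}(H^4)$ itself with only a constant-factor loss.
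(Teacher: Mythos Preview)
Your reduction to counting copies of $C_8$ and then applying Lemma~\ref{blowup} is fine, but the spectral step is a genuine gap. The inequality $\operatorname{tr}(H^4)\geq\Omega(|V|^8)$ does \emph{not} follow from the global cherry count together with the reversal symmetry $h_{p,q}=h_{\mathrm{rev}(q),\mathrm{rev}(p)}$. Indeed, the balanced complete $3$-partite $3$-graph on $n$ vertices has $\Omega(n^4)$ cherries and (being a hypergraph) automatically enjoys the reversal symmetry, yet it contains no tight $C_8$ whatsoever: in that hypergraph every tight walk cycles through the three parts periodically, so tight cycles must have length divisible by~$3$. For this example the cherry matrix has a block structure given by two disjoint directed $3$-cycles on the six types of crossing ordered pairs, whence $H^4$ has zero diagonal blocks and $\operatorname{tr}(H^4)=0$ after discarding degenerate walks. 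This also pinpoints the conceptual problem: invoking Lemma~\ref{cherries} with $P=Q=V^{(2)}$ extracts nothing beyond ``there are $\Omega(n^4)$ cherries'', a conclusion that already holds for any $3$-graph of positive edge density and hence cannot by itself force $C_8$.

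The paper's proof uses the $1/4+\varepsilon$ assumption in a more structural way. Theorem~\ref{K4-} first gives $\Omega(n^4)$ copies of $K_4^{(3)-}$; for a typical apex--base pair $(a,x)$ one then applies Lemma~\ref{cherries} to the set $P_{a,x}$ of pairs completing $(a,x)$ to a $K_4^{(3)-}$, obtaining a cherry between two such pairs. The resulting six-vertex configuration $F$ admits a homomorphism from $C_8$, so Lemma~\ref{blowup} yields $\Omega(n^8)$ copies of $C_8$ and then $\theta n^{32}$ copies of $C_8(4)$. The point is that the cherry lemma is applied to a \emph{restricted} family of pairs coming from the $K_4^{(3)-}$ structure, not to all of $V^{(2)}$; this is where the density threshold $1/4$ actually enters.
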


\begin{proof}
Given~$\epsilon>0$ we apply Theorem \ref{K4-} to obtain~$\rho_1$ and~$\xi$. Then, the application of Lemma~\ref{cherries} to~$\xi/6$ and~$\eps$ yields~$\rho_2$ and~$\nu$. Set~$\rho = \min\{\rho_1, \rho_2\}$ and let $n$ be sufficiently large.

Let~$H=(V,E)$ be a~$(\rho,1/4+\epsilon,\ev)$-dense hypergraph on~$n$ vertices. 
In view of Lemma \ref{blowup} it suffices to show that~$H$ contains~$\zeta n^8$ copies of~$C_8$
for some $\zeta>0$.

Theorem~\ref{K4-} implies that~$H$ contains at least~$\xi n^4$ copies of~$K_4^{(3)-}$. Let~$R$ be the set of ordered pairs~$(a,x)$ such that both vertices are contained in at least $\xi n^2/2$
of these $K_4^{(3)-}$ with~$a$ being the apex. By double counting we infer $|R|\geq \xi n^2/2$.

For every~$(a,x)\in R$, let $P_{a,x}\subseteq V^{(2)}$ 
be those pairs $\{y,z\}$ that span such a copy of~$K_4^{(3)-}$ together with $a$ and $x$.
We apply Lemma~\ref{cherries} to~$P=Q=P_{a,x}$ and infer that there are at 
least~$\nu n^4$~$(P,Q)$-cherries, i.e., tight paths with~$4$ vertices starting and ending at a pair from~$P_{a,x}$. 

Let~$F$ be the hypergraph with vertex set~$\{a,x,y,y',z,z'\}$ such that the following holds. 
The vertices~$\{a,x,y,z\}$ and~$\{a,x,y',z'\}$ span copies of~$K_4^{(3)-}$ with apex~$a$ and~$F$ contains a~$(\{y,z\},\{y',z'\})$-cherry. 
Observe that since~$y$ and~$z$ (resp.~$y'$ and~$z'$) play a symmetric role in~$K_4^{(3)-}$, regardless of the orientation of the pairs~$\{y,z\}$ and~$\{y',z'\}$ in the cherry the resulting hypergraph is isomorphic. Without loss of generality we will assume that the cherry is a tight path of the form~$yzy'z'$.  
By the reasoning above,~$H$ contains at least
\[|R|\cdot \nu n^4 \geq \frac{\xi}{2} \nu n^6\]
copies of~$F$. We argue that there is a homomorphism of~$C_8$ in~$F$. 
Indeed, if we consider the vertices of~$F$ in the following cyclic ordering 
$$x a y z y' z' a y'$$
one can check that every consecutive triple forms an edge in~$F$. Since there are at least~$\Omega(n^6)$ copies of~$F$ in~$H$, then by Lemma \ref{blowup} and taking~$\zeta$ small enough, we have that there are at least~$\zeta n^8$ copies of~$C_8$. 
\end{proof}

We are now ready to prove Lemma~\ref{AbsorbingPathLemma}.

\begin{proof}[Proof of Lemma~\ref{AbsorbingPathLemma}]
Given $\eps>0$ the constants appearing in this proof will satisfy the following hierarchy 
\begin{equation}
   \label{constants}
   1>\eps\gg\xi\,,\theta\gg \beta\gg \rho\,,\alpha\gg\gamma'\geq \gamma \gg \frac{1}{n}\,,
\end{equation}
where the auxiliary constants $\xi$, $\theta$, and $\alpha$ are provided by Lemmata~\ref{abs},~\ref{blowupcycle}, and~\ref{Connecting_Lemma} and it is easy to check 
that~\eqref{constants} complies with the quantification of these lemmata.
Let~$H$ be a~$(\rho, 1/4+\epsilon, \ev)$-dense hypergraph with~$\delta_1(H)\geq \epsilon n^2$ 
and let~$R$ be a subset of~$V$ with at most~$2\gamma^2 n$ vertices. Fix the subhypergraph $H_\beta\subseteq H$ provided by Lemma~\ref{cleaning}.

For~$T\in V^{3}$, let~$\mathcal{A}_T$ be the set of those absorbers for~$T$ in $H$ 
that have no vertex in $R$ and all its 36 internal edges from $H_\beta$. It follows from 
Lemma~\ref{abs} applied with $d=1/4+\eps$ and $\eps$ that
\[
	|\mathcal{A}_T|
	\geq
	\xi n^{21} - 21\,|R|n^{20} - 6\cdot 36\big(e(H)-e(H_\beta)\big)n^{18}
	\geq 
	\xi n^{21}- 42\,\gamma^2 n^{21} - 216\,\beta n^{21} 
	\overset{\eqref{constants}}{\geq} 
	\dfrac{\xi}{2} n^{21}\,.
\]
Let $\cA=\bigcup_T\cA_T$ be the union over all triples $T\in V^3$ and 
consider a random collection of absorbers~$\mathcal{C}\subseteq \mathcal{A}$ in which each element of~$\mathcal{A}$ is present independently with probability 
\[
	p=\dfrac{\gamma ^{4/3}n}{2|\mathcal{A}|}\,.
\]
Since~$\EE|\mathcal{A}| = p|\mathcal{A}|$, Markov's inequality ensures that
\begin{equation}
\label{abs1}
\PP\big(|\mathcal{C}|\geq \gamma^{4/3}n\big) \leq \dfrac{1}{2}.
\end{equation}
Moreover, for every~$T\in V^{3}$ we have
\[
	\EE|\mathcal{C}\cap \mathcal{A}_T|=p|\mathcal{A}_T|
	\geq 
	\frac{\gamma ^{4/3}n}{2|\cA|}\cdot\frac{\xi n^{21}}{2}
	\geq
	\dfrac{\gamma^{4/3} \xi n}{4}
	\overset{\eqref{constants}}{\geq} 
	4\gamma^{2} n\,,
\]
Chernoff's inequality combined with the union bound over all triples yields
\begin{equation}
\label{abs2}
	\PP\big( \exists  T\in V^{3}\colon |\mathcal{C}\cap \mathcal{A}_T|< 3 \gamma ^2n \big)
	\leq 
	o(1)\,. 
\end{equation}
Letting $Y$ be the number of pairs of distinct absorbers $A$, $A'\in\cC$ that share a vertex
we note 
\[
	\EE Y 
	= 
	p^2 \cdot n^{21}\cdot 21^2 \cdot n^{20}
	= 
	\frac{\gamma^{8/3} n^2}{4|\mathcal{A}|^2} \cdot 441 n^{41}
	\leq
	\dfrac{441\gamma^{8/3}n}{\xi^2} 
	\overset{\eqref{constants}}{\leq} 
	\dfrac{\gamma^2n}{4}
\]
and by Markov's inequality, we have
\begin{equation}
\label{abs3}
\PP(Y\geq \gamma^2n) \leq \dfrac{1}{4}\,.
\end{equation}

Consequently, with positive probability none of the bad events 
from \eqref{abs1}, \eqref{abs2}, and \eqref{abs3} happen. In particular, there exists a realisation of~$\mathcal{C}$ such that 
\[
	|\cC|< \gamma^{4/3}n\,,\qquad
	|\cC\cap \cA_T|\geq 3 \gamma ^2n\ \text{for every $T\in V^3$,}\qqand
	|Y(\cC)|< \gamma^2n\,.
\]
For every pair of absorbers accounted in $Y(\cC)$ we remove one of the involved absorbers in an arbitrary way and obtain a subset $\cB\subseteq \cC$ of pairwise vertex disjoint absorbers satisfying
\[
|\cB|\leq |\cC|< \gamma^{4/3}n\qqand
|\cB\cap \cA_T| > |\cC\cap \cA_T| - \gamma^2n \geq 2 \gamma ^2n\ \text{for every $T\in V^3$.}
\]
Recall that if the absorbing path would only contain the absorbers from $\cB$, then it could only absorb sets $U$ with a cardinality that is divisible by~$3$. We address this divisibility issue by adding a copy of~$C_8(4)$ to the path. Lemma~\ref{blowupcycle} guarantees at least 
$\theta n^{32}$ copies of~$C_8(4)$ in~$H$. Similarly, as for the estimate of $\cA_T$, 
we infer that there is one such~$C_8(4)$ which is vertex disjoint from the set $R$ and 
from all absorbers from $\cB$ and which only contains edges from~$H_\beta$. In fact, this follows from 
\begin{multline*}
	\theta n^{32}-32\,|R|n^{31}-21\,|\cB|n^{31}-6\cdot e(C_8(4))\big(e(H)-e(H_\beta)\big)n^{29}\\
	\geq 
	\theta n^{32} 
	-
	64\,\gamma^2n^{32} 
	-
	21\,\gamma^{4/3}n^{32}
	-
	3072\,\beta n^{32}
	\overset{\eqref{constants}}{>} 
	0\,.
\end{multline*}
Fix an ordering of the vertices of such a $C_8(4)$ that induces a 
tight path (see, e.g.,~\eqref{cyclepath}) and denote this path by $P_C$.

In order to obtain the final absorbing path, each absorber~$(K,P_1,P_2,P_3)\in \cB$ will be viewed as a collection of four tight paths:~$x_1x_2x_3z_1z_2z_3$ and~$a_ib_iy_ic_id_i$, for~$i=1,2,3$, as in Definition~\ref{defabs}.  
Therefore, together with joining $P_C$ we have to connect~$t=4|\cB|+1$ tight paths to build the promised absorbing path~$A$. 
For each of the connections we will appeal to Lemma~\ref{Connecting_Lemma} and each application will require to add up at most~$15$ inner vertices.

Let~$(P_i)_{i\in[t]}$ be an arbitrary enumeration of all these tight paths that need to be connected.
We continue in an inductive manner starting with $A_1=P_1$, 
let~$A_j$ be the already constructed tight path containing $P_i$ for every $i\leq j$.
Since every connection requires at most $15$ inner vertices and the longest path in 
$(P_i)_{i\in[t]}$ has $32$ vertices we have 
\begin{equation}\label{eq:absAj}
	|V(A_j)|+\sum_{i=j+1}^t|V(P_i)|
	\leq 15(j-1)+32t  
	\leq 47t
	\leq 47\big(4|\cB|+1\big)
	\leq 47\big(4\gamma^{4/3}n+1\big)
	\leq \gamma n\,.
\end{equation}

Suppose now that we want to connect~$P_j$, which ends in~$(x,y)$, to~$P_{j+1}$, which starts at~$(z,w)$. Since all tight paths $P_i$ with $i\in[t]$ have its edges in~$H_{\beta}$, by Lemma~\ref{cleaning}
they are~$\beta$-connectable. Therefore, Lemma~\ref{Connecting_Lemma} implies that there are at least~$\alpha n^{\l}$ tight paths, with~$\l\leq 15$ inner vertices, connecting~$(x,y)$ 
with~$(z,w)$ in~$H$. Consequently, in view of~\eqref{eq:absAj} 
and $|R|\leq2\gamma^2 n$ our choice of $\gamma$ in \eqref{constants} shows that 
at least one of such connecting paths must be vertex disjoint from
\[
	V(A_j)\cup\bigcup_{i=j+1}^t V(P_i)\cup R\,,
\]
which concludes the inductive step and proves the existence of the tight path $A_{j+1}$.

Finally, let~$A=A_t$ be the final tight path and let~$U\subseteq V\smallsetminus V(A)$ 
with~$|U|\leq 3\gamma^2n$. First we remove~$0$,~$8$ or~$16$ vertices from $P_C$ in~$A$ and reallocate them to~$U$ to get a set~$U'$ with size divisible by three. Moreover~$|U'|\leq 3\gamma^2 n + 16\leq 3(\gamma^2n + 6)$ and, hence,~$U'$ can be split into at most~$\gamma^2n + 6$ disjoint triples. Since each triple has at least~$2\gamma^2n>\gamma^2n + 6$ absorbers in~$A$, we can greedily assign one for each and absorb all of them into~$A$. 
\end{proof}

\section{Proof of Theorem~\ref{maintheorem2}}
\label{SectionCherry}

In this section we discuss the few modifications necessary in the proof of Theorem~\ref{maintheorem} in order to prove Theorem~\ref{maintheorem2}. Recall that both theorems have the same minimum vertex degree assumption. However, where Theorem~\ref{maintheorem2} requires the given hypergraph $H$ to be $\ee$-dense for some positive density, Theorem~\ref{maintheorem} requires $\ev$-density bigger than $1/4$. In other words, the uniform density assumptions of both theorems are incomparable.

The proof of Theorem~\ref{maintheorem} consist of three main parts, namely Lemmata~\ref{AC}\,--\,\ref{Connecting_Lemma}. Observe that Lemma~\ref{AC} can be applied directly under the conditions of Theorem~\ref{maintheorem2}, but for Lemmata~\ref{AbsorbingPathLemma} and~\ref{Connecting_Lemma} we have the assumption of~$\ev$-density at least~$1/4$ which is not provided by  Theorem~\ref{maintheorem2}. We start with the discussion of the Connecting Lemma 
in the context of Theorem~\ref{maintheorem2} in the next section and defer the discussion 
of the adjustments for the Absorbing Path Lemma (Lemma~\ref{AbsorbingPathLemma}) to Section~\ref{sec:APL2}. 

\subsection{Connecting Lemma for Theorem~\ref{maintheorem2}}
\label{sec:CLee}
The following lemma will play the r\^ole of Lemma~\ref{Connecting_Lemma} in the proof of Theorem~\ref{maintheorem}.  

\begin{lemma}[Connecting Lemma for~$\ee$-density conditions]\label{Connecting_Lemma2}
For every~$d$, $\beta>0$ there exist~$\rho$, $\alpha>0$ and an~$n_0$ such that for every~$(\rho, d, \ee)$-dense hypergraph~$H$ on~$n\geq n_0$ vertices the following holds. 

For every~$\ell\in \{5,6,7\}$ and for every pair of disjoint ordered~$\beta$-connectable pairs~$(x,y)$, 
$(w,z) \in V\times V$, the number of~$(x,y)$-$(z,w)$-paths with~$\ell$ inner vertices is at least~$\alpha n^{\ell}$. 
\end{lemma}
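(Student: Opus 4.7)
The plan is to follow the template of Lemma~\ref{Connecting_Lemma} while exploiting the strength of the $\ee$-density---which directly bounds the number of cherries between arbitrary large pair sets---to dispense with both the $1/4$ threshold and the $K_4^{(3)-}$-turn machinery.  This also pins down the connecting length to exactly any of the three values $\ell\in\{5,6,7\}$.

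After cleaning via Lemma~\ref{cleaning} with some $\beta'\ll\beta$ to pass to $H'\subseteq H$ in which every pair has codegree $0$ or at least $\beta' n$ (preserving $\ee$-density up to negligible error and keeping both input pairs connectable), I would first form the second-neighbourhood pair sets $P$ and $Q$ as in~\eqref{eq:CLPP'}, both of size at least $\beta^2n^2/4$, and refine them by the $\ee$-density of $H'$ (against $V\times V$) so that every remaining pair has codegree at least $dn/2$.  Next, for each $k\geq 2$ let $\mathcal{P}_k$ be the multiset of pairs $(a,b)$ with multiplicity $P_k(a,b)$ equal to the number of length-$k$ tight paths $(x,y)\to(a,b)$ in $H'$; since only $k-2$ inner vertices are free, we have the trivial cap $P_k(a,b)\leq n^{k-2}$.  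An induction on $k$, applying $\ee$-density at each extension step to preserve codegree $\Omega(n)$ on the bulk of the current support, yields $\sum_{(a,b)} P_k(a,b)\geq c_k n^k$, and, combined with the multiplicity cap, produces a ``good'' subset $S_k^{\ast}$ of the support of size $\Omega(n^2)$ on which $P_k$ attains multiplicity $\Omega(n^{k-2})$ and codegree $\Omega(n)$ simultaneously.  The symmetric multiset $\mathcal{Q}_k$ with good subset $T_k^{\ast}$ is obtained from $(w,z)$ counting tight paths ending at $(z,w)$.

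For each $\ell\in\{5,6,7\}$ I choose $k_1, k_2\geq 2$ with $k_1+k_2=\ell+1$ (e.g.\ $(3,3)$, $(3,4)$, $(4,4)$ for $\ell=5,6,7$), and split a tight $(x,y)$-$(z,w)$-path of length $\ell+2$ into a length-$k_1$ prefix, a single middle edge, and a length-$k_2$ suffix.  Up to an $O(n^{\ell-1})$ correction for non-simple configurations, the number of such paths equals
\[
\sum_{abc\in E(H')}P_{k_1}(a,b)\,Q_{k_2}(b,c).
\]
A final application of the $\ee$-density to $S_{k_1}^{\ast}$ and $T_{k_2}^{\ast}$ yields $\Omega(n^3)$ cherries $(a,b,c)$ with $(a,b)\in S_{k_1}^{\ast}$, $(b,c)\in T_{k_2}^{\ast}$, and $abc\in E(H')$; each such cherry contributes $\Omega(n^{k_1-2})\cdot\Omega(n^{k_2-2})=\Omega(n^{\ell-3})$ to the above sum by the multiplicity lower bounds, for a total of $\Omega(n^\ell)\geq \alpha n^\ell$, as required.

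The delicate point is the inductive step in the iterated extension: one must maintain at each level both the spread-out mass of $\mathcal{P}_k$ and the bulk codegree $\Omega(n)$, while absorbing the $\ee$-density error at each of the $O(1)$ steps.  Equally important is the verification that the coordinate-marginals of $S_{k_1}^{\ast}$ and $T_{k_2}^{\ast}$ each cover $\Omega(n)$ values, so that $|K_{\ee}(T_{k_2}^{\ast},S_{k_1}^{\ast})|=\Omega(n^3)$ and the last cherry count is meaningful.  Both reduce to further careful applications of the $\ee$-density, which is strong enough to rule out the pathological concentrations that could otherwise break either argument.
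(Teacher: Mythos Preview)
Your overall strategy matches the paper's: pass to the second-neighbourhood pair sets of the two connectable pairs and use one application of the $\ee$-density for the final join.  However, there is a genuine gap in your gluing step.  You assert that once the coordinate-marginals of $S_{k_1}^{\ast}$ and $T_{k_2}^{\ast}$ each cover $\Omega(n)$ values, it follows that $|K_{\ee}(T_{k_2}^{\ast},S_{k_1}^{\ast})|=\Omega(n^3)$.  This implication is false: for instance $S=A\times B$ and $T=C\times D$ with $|A|=|B|=|C|=|D|=\Theta(n)$ and $B\cap C=\varnothing$ have large marginals but $|K_{\ee}(S,T)|=0$.  Since the $\ee$-density lower bound is $d\,|K_{\ee}|-\rho n^3$, you obtain nothing in that situation.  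Your closing sentence appeals to ``further careful applications of the $\ee$-density'' to rule out such concentration, but nothing in the $\ee$-condition forces $S_{k_1}^{\ast}$ and $T_{k_2}^{\ast}$---which you built completely independently from opposite ends---to share a common set of middle vertices~$b$ on which both are rich.

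The paper closes exactly this gap by \emph{not} constructing the two pair sets independently.  It first uses the $\ev$-density (implied by $\ee$) through Lemma~\ref{LemmaRegularDegree} to isolate a set $X'\subseteq V$ of size $\Omega(n)$ such that every $x\in X'$ satisfies $|N(x,P)|=\Omega(n^2)$ and $|N(x,P')|=\Omega(n^2)$.  Only then does it form $Q\subseteq V\times X'$ and $Q'\subseteq X'\times V$, so that \emph{by construction} every $x\in X'$ contributes $\Omega(n)$ pairs to each; this forces $|K_{\ee}(Q,Q')|\geq |X'|\cdot\Omega(n)\cdot\Omega(n)=\Omega(n^3)$, and one $\ee$-application then yields the $\Omega(n^5)$ five-vertex paths.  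The cases $\ell=6,7$ are handled asymmetrically, by pushing one or two extra steps out from a single end (using that $\ev$-density regenerates many connectable pairs), rather than by your symmetric $(k_1,k_2)$ split.  To repair your argument you would have to interleave the two extension processes through a common middle vertex set, which in effect reproduces the paper's construction.
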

\begin{proof}[Proof of Lemma~\ref{Connecting_Lemma2} (sketch)]
We begin with the following observation. Let $P$, $P'\subseteq V\times V$ each of 
size at least $\Omega(n^2)$ we show that 
\begin{equation}\label{eq:CLee1}
	\text{there are at least $\Omega(n^5)$
$p$-$p'$-paths with one inner vertex and $p\in P$, $p'\in P'$.}
\end{equation}
Note that 
every~$(\rho, d, \ee)$-dense hypergraph is~$(\rho, d, \ev)$-dense
and in view of Lemma~\ref{LemmaRegularDegree} applied to $P$ and $V$ there is a set 
$X\subseteq V$ such that 
$|X|=\Omega(n)$ and for every $x\in X$ we have~$|N(x,P)|=\Omega(n^2)$.
Similarly, another application of Lemma~\ref{LemmaRegularDegree} to $P'$ and $X$
yields a set $X'\subseteq X$ of size $\Omega(n)$ such that 
\[
	|N(x,P)|=\Omega(n^2)
	\qqand
	|N(x,Q)|=\Omega(n^2)
\]
for every $x\in X'$. Consequently, a standard averaging argument tells us that 
each of the sets 
\[
	Q=\big\{(p_2,x)\in V\times X'\colon 
		|\{p_1\in V\colon (p_1,p_2)\in P\tand p_1p_2x\in E\}|=\Omega(n)\big\}
\]
and 
\[
	Q'=\big\{(x,p'_1)\in X'\times V\colon 
		|\{p'_2\in V\colon (p'_1,p'_2)\in P'\tand xp'_1p'_2\in E\}|=\Omega(n)\big\}
\]
has size $\Omega(n^2)$. Finally, the $\ev$-density of $H$ applied to $Q$ and $Q'$
yields $\Omega(n^5)$ $p$-$p'$-paths starting in $P$ and ending in $P'$ with an
inner vertex from $X$, i.e., it establishes~\eqref{eq:CLee1}.

For given connectable pairs $(x,y)$ and $(w,z)$
letting $P$ and $P'$ be their second neighbourhoods as defined in~\eqref{eq:CLPP'}, yields
the conclusion of Lemma~\ref{Connecting_Lemma2} for $\l=5$. 

For $\l=6$ we note that $\ev$-density implies that there are $\Omega(n^2)$
$\beta'$-connectable pairs $(y,y')$ with $xyy'\in E$ for sufficiently small $\beta'=\beta'(d)>0$.
Applying the same argument as above for every such pair $(y,y')$ proves the case 
$\l=6$. Finally, for $\l=7$ the same reasoning applied to the connectable pairs 
$(y',y'')$ with $xyy'$, $yy'y''\in E$ concludes the proof. 
\end{proof}

\subsection{Absorbing Path Lemma for Theorem~\ref{maintheorem2}}
\label{sec:APL2}
Recall that the proof of Lemma~\ref{AbsorbingPathLemma} required $\ev$-density bigger than 
$1/4$ in only two places:
\begin{enumerate}[label=\rmlabel]
\item\label{it:ee1} for the connection of the absorbers to a tight path and
\item\label{it:ee2} in Lemma~\ref{blowupcycle} for addressing the divisibility issue of the size of the
	absorbable sets,
\end{enumerate}
while for the abundant existence of the absorbers $\ev$-density $d$ for any $d>0$ is sufficient (see Lemma~\ref{abs}).
As shown in Section~\ref{sec:CLee} for the connecting lemma positive $\ee$-density suffices, which addresses~\ref{it:ee1}.
Moreover, in Lemma~\ref{Connecting_Lemma2} we are even free to choose the length of the connecting paths, which renders the divisibility issue from~\ref{it:ee2}
in this context.

\section{Concluding remarks}
\label{SectionFurtherRemarks}
We briefly discuss a few open problems for $3$-uniform hypergraphs 
and possible generalisations of 
Theorems~\ref{maintheorem} and~\ref{maintheorem2} to $k$-uniform hypergraphs.
\subsection{Problems for \texorpdfstring{$3$-uniform}{3-uniform} hypergraphs}
Theorems~\ref{maintheorem} and~\ref{maintheorem2} concern asymptotically 
optimal assumptions for uniformly dense hypergraphs that guarantee
the existence of Hamilton cycles. The following notation will be useful for the further discussion.
\begin{dfn}
Given $\star\in\{\vvv,\ev,\ee\}$ and $a\in\{1,2\}$. We say a pair of reals $(d,\alpha)$ is~{\rm$(\star,a)$-Hamilton} if
the following assertion holds:

For every $\eps>0$ there exist $\rho>0$ and $n_0$ such that every $(\rho,d+\eps,\star)$-dense hypergraph $H=(V,E)$
with $|V|=n\geq n_0$ and $\delta_a(H)\geq (\alpha+\eps)\binom{n}{3-a}$ contains a tight Hamilton cycle.
\end{dfn}
We remark that we can restrict our attention to tight Hamilton cycles, since the result of Lenz, Mubayi, and Mycroft~\cite{mubayi}
asserts that already $(0,0)$ would be $(\star,a)$-Hamilton for loose cycles for every choice of 
$\star\in\{\vvv,\ev,\ee\}$ and $a\in\{1,2\}$.
For tight Hamilton cycles Aigner-Horev and Levy~\cite{cherry} showed that 
$(0,0)$ is $(\ee,a)$-Hamilton for $a=2$ and this was extended by Gan and Han~\cite{GH} and by Theorem~\ref{maintheorem2}  to $a=1$.
It remains to characterise the minimal pairs $(d,\alpha)$ that are $(\star,a)$-Hamilton for the four combinations
$\star\in\{\vvv,\ev\}$ and $a\in\{1,2\}$.

Example~\ref{ex:lbb} shows that for 
$(d,\alpha)$ being $(\ev,1)$-Hamilton we must have
\begin{equation}\label{eq:ev2lb}
\max\{d,\alpha\}\geq \frac{1}{4}\,.
\end{equation}
On the other hand, Theorem~\ref{maintheorem} asserts that for $d=1/4$ already $\alpha=0$ suffices. It would be interesting to determine
the smallest value $\alpha_{\ev,1}$ such that $d=0$ suffices. In view of~\eqref{eq:ev2lb} we have $\alpha_{\ev,1}\geq 1/4$
and the result from~\cite{5/9} bounds $\alpha_{\ev,1}$
by~$5/9$. Since all known lower bound constructions for that result are lacking to be $\ev$-dense
it seems plausible that~$\alpha_{\ev,1}<5/9$.

Similarly, let $\alpha_{\ev,2}$ be the infimum over all $\alpha\geq 0$ such that $(0,\alpha)$ is $(\ev,2)$-Hamilton. Here it follows
from~\cite{RRSz} that $\alpha_{\ev,2}\leq 1/2$. Moreover, 
Example~\ref{ex:lbb} yields a hypergraph with minimum codegree $(1/4-o(1))n$ that fails to contain a tight Hamilton cycle. Therefore, we have $\alpha_{\ev,2}\geq 1/4$ and at this point we are not aware of any reason that excludes the possibility that~$\alpha_{\ev,2}$ matches this lower bound.

\begin{problem}
Determine $\alpha_{\ev,1}$ and $\alpha_{\ev,2}$.
\end{problem}

For tight Hamilton cycles in $\vvv$-dense hypergraphs the problem appears to be more delicate as the following unbalanced version
of Example~\ref{ex:lbb} shows. Instead of a uniformly chosen bipartition of $E(K_{n-2})$ we may colour the edges independently $\red$
with probability $p$ and $\blue$ with probability $1-p$. Let $H_p$ be the resulting hypergraph, where the rest of the construction
is carried out in the same way as in Example~\ref{ex:lbb}. By symmetry we may assume $p\geq 1/2$ and for the same reasons as in Example~\ref{ex:lbb} 
the hypergraph $H_p$ contains no tight Hamilton cycle. Moreover, for every fixed $\rho>0$ we have with high probability that
\[
\delta_1(H_p)=\big(\min\{1-p\,, p^3+(1-p)^3\}-\rho\big)\tbinom{n}{2}
\qand
\delta_2(H_p)=\big((1-p)^2-\rho\big)n
\]
and that $H_p$ is $(\rho,p^3+(1-p)^3,\vvv)$-dense. For $p$ close to 1 this shows that there is no $d<1$ such that
$(d,0)$ is $(\vvv,a)$-Hamilton for $a\in\{1,2\}$. In particular, there is no straightforward analogue of Theorem~\ref{maintheorem} in this setting.

It would be intriguing if this construction is essentially optimal for every~$p\geq 1/2$.
In such an event it would imply a resolution of the following problems, where the lower bound would be obtained from $H_p$ for $p=2/3$ and $p=1/2$.
\begin{problem}
Is it true that:
\begin{enumerate}[label=\rmlabel]
\item $(1/3,1/3)$ is $(\vvv,1)$-Hamilton?
\item $(1/4,1/4)$ is $(\vvv,2)$-Hamilton?
\end{enumerate}
\end{problem}

\subsection{Possible generalisations to \texorpdfstring{$k$-uniform}{k-uniform} hypergraphs}
The notion of tight Hamilton cycles straight forwardly extends to $k$-uniform 
hypergraphs. Moreover, the definition of uniformly dense hypergraphs is inspired from the 
theory of quasirandom hypergraphs (see, e.g.,~\cites{ACHPS,Towsner} and the references therein). 
Below we briefly recall the generalisation of Definition~\ref{def:vvv} for general $k$-uniform 
hypergraphs, where we follow the presentation from~\cite{RRS3}.

Given a nonnegative integer $k$, a finite set $V$, and a set $Q\subseteq [k]$ we write $V^Q$ for the set of all functions from~$Q$ to~$V$. 
It will be convenient to identify the Cartesian power 
$V^k$ with~$V^{[k]}$ by regarding any $k$-tuple $\seq{v}=(v_1, \ldots, v_k)$ as
being the function $i\longmapsto v_i$.
We denote by~$\seq{v}\longmapsto \seq{v}\,|\,Q$ the projection from $V^{k}$ to $V^Q$ 
and the preimage of any set~$G_Q\subseteq V^Q$ is denoted by
\[
	\cK_k(G_Q)=\bigl\{\seq{v}\in V^{k}\colon (\seq{v}\,|\,Q)\in G_Q\bigr\}\,.
\]
We may think of $G_Q\subseteq V^Q$ as a directed hypergraph (where vertices in the directed hyperedges are also allowed to repeat).
More generally, for a subset $\cQ\subseteq\powerset([k])$ of the power set of $[k]$
and a family $\ccG=\{G_Q\colon Q\in \cQ\}$ with $G_Q\subseteq V^Q$ for all $Q\in\cQ$,
we  define
\begin{equation}\label{eq:clique}
	\cK_k(\ccG)=\bigcap_{Q\in\cQ}\cK_k(G_Q)\,.
\end{equation}
Moreover, if $H=(V, E)$ is a $k$-uniform hypergraph on $V$, then $e_H(\ccG)$
denotes the cardinality of the set
\[
	E_H(\ccG)=\bigl\{(v_1, \ldots, v_k)\in \cK_k(\ccG)\colon\{v_1, \ldots, v_k\}\in E\bigr\}\,.
\]
Now we are ready to state the generalisation of Definition~\ref{def:vvv}.

\begin{dfn} \label{dfn:dense}
Let~$\rho$, $d \in (0,1]$, let~$H=(V,E)$ be a~$k$-uniform hypergraph on~$n$ vertices,
and let $\cQ\subseteq\powerset([k])$ be given. We say that $H$ is 
\emph{$(\rho, d, \cQ)$-dense} if for every family $\ccG=\{G_Q\colon Q\in \cQ\}$ associating with each $Q\in\cQ$ 
some $G_Q\subseteq V^Q$ we have
\[
	e_H(\ccG)\ge d\,|\cK_k(\ccG)|-\rho n^k\,.
\]
\end{dfn}  
It is easy to check that for $k=3$ the following subsets of $\powerset([3])$ 
\[
	\cQ_{\vvv}=\big\{\{1\},\{2\},\{3\}\big\}\,,\quad
	\cQ_{\ev}=\big\{\{1\}, \{2, 3\}\big\}\,,\qand
	\cQ_{\ee}=\big\{\{1, 2\}, \{1, 3\}\big\}
\]
correspond to $\vvv$-, $\ev$-, and $\ee$-dense hypergraphs. More precisely, for 
every $\star\in\{\vvv,\ev,\ee\}$  we have that 
a $3$-uniform hypergraph is $(\rho,d,\star)$-dense if and only if it is 
$(\rho,d,\cQ_\star)$-dense.

Example~\ref{ex:lbb} straight forwardly extends to $k$-uniform hypergraphs. 
In fact, we may consider a random bipartition $G\dcup\overline{G}$ of the $(k-1)$-element subsets 
of an $(n-2)$-element set 
and we define a $k$-uniform hypergraph containing only those hyperedges 
with the property that all of its $(k-1)$-element subsets are in the same partition class. 
Finally, we may add two vertices~$x$ and $y$ such that the $(k-1)$-uniform link 
of~$x$ is $G$ and the $(k-1)$-uniform link of $y$ is $\overline{G}$. We remark 
that for $k=2$ this construction leads to two disjoint cliques with $\sim n/2$ vertices, which is a lower bound construction for Dirac's theorem~\cite{Dirac} in graphs. 

It is easy to check that 
the resulting $k$-uniform hypergraph$~H$ does not contain a tight Hamilton cycle and for every fixed $\rho>0$ it is $(\rho, 2^{1-k},\cQ)$-dense for
\[
	\cQ=\big\{Q\in [k]^{(k-2)}\colon 1\in Q\big\}\cup \big\{\{2,\dots,k\}\big\}
\] 
with high probability for sufficiently large $n$. Note that for $k=3$ we have $\cQ=\cQ_{\ev}$ 
and~$H$ provides a lower bound for Theorem~\ref{maintheorem}. It seems plausible that the 
hypergraph~$H$ is essentially optimal for $\cQ$-dense hypergraphs also for $k>3$, i.e., that 
$\cQ$-dense $k$-uniform $n$-vertex hypergraphs with density bigger than $2^{1-k}$
and minimum vertex degree $\Omega(n^{k-1})$ contain a tight Hamilton cycle. This would 
be an interesting extension of Theorem~\ref{maintheorem} to $k$-uniform hypergraphs.

Moreover, one can check that for
\[
	\cQ'=\big\{\{1,\dots,k-1\},\{1,\dots,k-2,k\}\big\}
\]
the hypergraph $H$ constructed above is not 
$(\rho, d,\cQ')$-dense for any fixed $d>0$ and sufficiently small $\rho>0$. Note that for 
$k=3$ we have $\cQ'=\cQ_{\ee}$ and, in fact, Theorem~\ref{maintheorem2} asserts that 
$(\rho,d,\cQ')$-dense hypergraphs with minimum vertex degree $\Omega(n^{2})$
contain a Hamilton cycle for any $d>0$ and sufficiently small $\rho$. 
We remark that the proof of Theorem~\ref{maintheorem2} discussed in 
Section~\ref{SectionCherry} extends to $k$-uniform $\cQ'$-dense hypergraphs with an appropriate 
minimum vertex degree condition.

\begin{bibdiv}
\begin{biblist}

\bib{ACHPS}{article}{
   author={Aigner-Horev, Elad},
   author={Conlon, David},
   author={H\`an, H.},
   author={Person, Yury},
   author={Schacht, Mathias},
   title={Quasirandomness in hypergraphs},
   journal={Electron. J. Combin.},
   volume={25},
   date={2018},
   number={3},
   pages={Paper 3.34, 22},
   review={\MR{3853886}},
   doi={10.37236/7537},
}

\bib{cherry}{article}{
	author={Aigner-Horev, Elad},
	author={Levy, Gil},
	title={Tight {H}amilton cycles in cherry quasirandom $3$-uniform hypergraphs},
	eprint={1712.00186},
}

\bib{BR}{article}{
	author={Blakley, G. R.},
	author={Roy, Prabir},
	title={A H\"older type inequality for symmetric matrices with nonnegative
					entries},
	journal={Proc. Amer. Math. Soc.},
	volume={16},
	date={1965},
	pages={1244--1245},
	issn={0002-9939},
	review={\MR{0184950}},
}

\bib{loosedegree}{article}{
   author={Bu\ss , Enno},
   author={H\`an, Hi\^{e}p},
   author={Schacht, Mathias},
   title={Minimum vertex degree conditions for loose Hamilton cycles in
   3-uniform hypergraphs},
   journal={J. Combin. Theory Ser. B},
   volume={103},
   date={2013},
   number={6},
   pages={658--678},
   issn={0095-8956},
   review={\MR{3127586}},
   doi={10.1016/j.jctb.2013.07.004},
}

\bib{chvatalerdos}{article}{
   author={Chv\'{a}tal, V.},
   author={Erd\H{o}s, P.},
   title={A note on Hamiltonian circuits},
   journal={Discrete Math.},
   volume={2},
   date={1972},
   pages={111--113},
   issn={0012-365X},
   review={\MR{297600}},
   doi={10.1016/0012-365X(72)90079-9},
}

\bib{Dirac}{article}{
	author={Dirac, G. A.},
	title={Some theorems on abstract graphs},
	journal={Proc. London Math. Soc. (3)},
	volume={2},
	date={1952},
	pages={69--81},
	issn={0024-6115},
	review={\MR{0047308 (13,856e)}},
}	
	
\bib{erdos1964extremal}{article}{
   author={Erd\H{o}s, P.},
   title={On extremal problems of graphs and generalized graphs},
   journal={Israel J. Math.},
   volume={2},
   date={1964},
   pages={183--190},
   issn={0021-2172},
   review={\MR{183654}},
   doi={10.1007/BF02759942},
}	

\bib{GH}{article}{
	author={Gan, L.},
	author={Han, J.},
	title={Hamiltonicity in cherry-quasirandom 3-graphs},
	eprint={2004.12518},
}

\bib{K4minus}{article}{
   author={Glebov, Roman},
   author={Kr\'{a}l', Daniel},
   author={Volec, Jan},
   title={A problem of Erd\H{o}s and S\'{o}s on 3-graphs},
   journal={Israel J. Math.},
   volume={211},
   date={2016},
   number={1},
   pages={349--366},
   issn={0021-2172},
   review={\MR{3474967}},
   doi={10.1007/s11856-015-1267-4},
}

\bib{randomgraphs}{book}{
	author={Janson, Svante},
	author={{\L}uczak, Tomasz},
	author={Ruci{\'n}ski, Andrzej},
	title={Random graphs},
	series={Wiley-Interscience Series in Discrete Mathematics and
				Optimization},
	publisher={Wiley-Interscience, New York},
	date={2000},
	pages={xii+333},
	isbn={0-471-17541-2},
	review={\MR{1782847}},
	doi={10.1002/9781118032718},
}

\bib{KATONA}{article}{
   author={Katona, Gyula Y.},
   author={Kierstead, H. A.},
   title={Hamiltonian chains in hypergraphs},
   journal={J. Graph Theory},
   volume={30},
   date={1999},
   number={3},
   pages={205--212},
   issn={0364-9024},
   review={\MR{1671170}},
   doi={10.1002/(SICI)1097-0118(199903)30:3<205::AID-JGT5>3.3.CO;2-F},
}

\bib{loosecodegree}{article}{
   author={K\"{u}hn, Daniela},
   author={Osthus, Deryk},
   title={Loose Hamilton cycles in 3-uniform hypergraphs of high minimum
   degree},
   journal={J. Combin. Theory Ser. B},
   volume={96},
   date={2006},
   number={6},
   pages={767--821},
   issn={0095-8956},
   review={\MR{2274077}},
   doi={10.1016/j.jctb.2006.02.004},
}

\bib{mubayi}{article}{
   author={Lenz, John},
   author={Mubayi, Dhruv},
   author={Mycroft, Richard},
   title={Hamilton cycles in quasirandom hypergraphs},
   journal={Random Structures Algorithms},
   volume={49},
   date={2016},
   number={2},
   pages={363--378},
   issn={1042-9832},
   review={\MR{3536544}},
   doi={10.1002/rsa.20638},
}

\bib{peng2002holes}{article}{
   author={Peng, Yuejian},
   author={R\"{o}dl, Vojtech},
   author={Ruci\'{n}ski, Andrzej},
   title={Holes in graphs},
   journal={Electron. J. Combin.},
   volume={9},
   date={2002},
   number={1},
   pages={Research Paper 1, 18},
   review={\MR{1887082}},
}

\bib{PR}{misc}{
	author={Polcyn, J.},
	author={Reiher, Chr.},
	note={Personal communication},
	date={2018},
}

\bib{Christian}{article}{
	author={Reiher, Chr.},
	title={Extremal problems in uniformly dense hypergraphs},
	eprint={1901.04027},
}

\bib{5/9}{article}{
   author={Reiher, Chr.},
   author={R\"{o}dl, Vojt\v{e}ch},
   author={Ruci\'{n}ski, Andrzej},
   author={Schacht, Mathias},
   author={Szemer\'{e}di, Endre},
   title={Minimum vertex degree condition for tight Hamiltonian cycles in
   3-uniform hypergraphs},
   journal={Proc. Lond. Math. Soc. (3)},
   volume={119},
   date={2019},
   number={2},
   pages={409--439},
   issn={0024-6115},
   review={\MR{3959049}},
   doi={10.1112/plms.12235},
}

\bib{RRS-JCTB}{article}{
   author={Reiher, Chr.},
   author={R\"{o}dl, Vojt\v{e}ch},
   author={Schacht, Mathias},
   title={Embedding tetrahedra into quasirandom hypergraphs},
   journal={J. Combin. Theory Ser. B},
   volume={121},
   date={2016},
   pages={229--247},
   issn={0095-8956},
   review={\MR{3548293}},
   doi={10.1016/j.jctb.2016.06.008},
}

\bib{Someremarks}{article}{
   author={Reiher, Chr.},
   author={R\"{o}dl, Vojt\v{e}ch},
   author={Schacht, Mathias},
   title={Some remarks on $\piee$},
   conference={
      title={Connections in Discrete Mathematics: A Celebration of the Work of Ron Graham},
   },
   book={
      publisher={Cambridge Univ. Press, Cambridge},
   },
   date={2018},
   pages={214--239},
   review={\MR{3821841}},
}

\bib{RRS3}{article}{
   author={Reiher, Chr.},
   author={R\"{o}dl, Vojt\u{e}ch},
   author={Schacht, Mathias},
   title={On a generalisation of Mantel's theorem to uniformly dense
   hypergraphs},
   journal={Int. Math. Res. Not. IMRN},
   date={2018},
   number={16},
   pages={4899--4941},
   issn={1073-7928},
   review={\MR{3848224}},
   doi={10.1093/imrn/rnx017},
}

\bib{RRSz}{article}{
   author={R\"{o}dl, Vojtech},
   author={Ruci\'{n}ski, Andrzej},
   author={Szemer\'{e}di, Endre},
   title={A Dirac-type theorem for 3-uniform hypergraphs},
   journal={Combin. Probab. Comput.},
   volume={15},
   date={2006},
   number={1-2},
   pages={229--251},
   issn={0963-5483},
   review={\MR{2195584 (2006j:05144)}},
   doi={10.1017/S0963548305007042},
}	

\bib{Towsner}{article}{
   author={Towsner, Henry},
   title={$\sigma$-algebras for quasirandom hypergraphs},
   journal={Random Structures Algorithms},
   volume={50},
   date={2017},
   number={1},
   pages={114--139},
   issn={1042-9832},
   review={\MR{3583029}},
   doi={10.1002/rsa.20641},
}

\end{biblist}
\end{bibdiv}

\end{document}